\documentclass{amsart}

\usepackage{amssymb, amscd, amsmath}

\newtheorem{theorem}{Theorem}[section]
\newtheorem{proposition}[theorem]{Proposition}
\newtheorem{corollary}[theorem]{Corollary}
\newtheorem{lemma}[theorem]{Lemma}

\theoremstyle{remark}
\newtheorem{remark}[theorem]{\bf Remark}

\newcommand{\onto}[1]{\stackrel{#1}{\to}}

\newcommand{\inj}{\hookrightarrow}

\newcommand{\cO}{\mathcal{O}}
\newcommand{\fa}{\mathfrak{a}}

\newcommand{\bn}{\bigskip\noindent}

\newcommand{\Pm}{(\mathrm{P}_m)}
\newcommand{\PPm}{(\mathrm{P}'_m)}
\newcommand{\Pem}{(\mathrm{P}^e_m)}
\newcommand{\Tem}{(\mathrm{T}^e_m)}
\newcommand{\ii}{\textbf{i}}
\newcommand{\uu}{\textbf{u}}
\newcommand{\ff}{\widetilde{\varphi}}
\newcommand{\fp}{\widetilde{\psi}}
\newcommand{\mm}{\mathfrak{m}}
\newcommand{\mZ}{\mathbb{Z}}
\newcommand{\res}{\mathrm{Res}}

\title[\tiny An ultrametric space of Eisenstein polynomials and ramification theory]
{An ultrametric space of Eisenstein polynomials \\and ramification theory}

\author{Manabu Yoshida}
\address{
	Graduate School of Mathematics, Kyushu University,
	Fukuoka 819-0395, Japan
}
\email{m-yoshida@math.kyushu-u.ac.jp}
\thanks{The author is supported by the Japan Society for the Promotion 
of Scientist Fellowships for Young Scientists.}
\date{}
\subjclass[2010]{Primary: 11S15, Secondly: 11S05}
\keywords{Eisenstein polynomials, Ramification theory}

\begin{document}

\maketitle

\begin{abstract}
We give a criterion whether given Eisenstein polynomials 
over a local field $K$ define the same extension over $K$ 
in terms of a certain non-Archimedean metric on the set of 
polynomials. The criterion and its proof depend on 
ramification theory. 
\end{abstract}
\section{Introduction}
\quad Let $K$ be a complete discrete valuation field, 
$k$ its residue field (which may be imperfect) of characteristic $p>0$, 
$v_K$ its valuation normalized by $v_K(K^{\times})=\mathbb{Z}$, 
$\cO_K$ its valuation ring, 
$\Omega$ a fixed algebraic closure of $K$ and 
$\bar{K}$ the separable closure in $\Omega$. 
The valuation $v_K$ can be extended to $\Omega$ uniquely and the 
extension is also denoted by $v_K$. 
Let $L/K$ be a finite Galois extension with ramification index $e$ and 
inertia degree $1$. 
Denote by $\cO_L$ the integral closure of $\cO_K$ in $L$. 
Take a uniformizer $\pi_L$ of $\cO_L$ and its minimal polynomial $f$ over $K$, 
which is an Eisenstein polynomial over $K$. 
Let $E_K^e$ be the set of all Eisenstein polynomials over $K$ of degree $e$. 
For two polynomials $g=\sum a_i X^i, h=\sum b_i X^i \in E_K^e$, we put 
\[ v_K(g,h):=\min_{0 \leq i \leq e-1} \{v_K(a_i - b_i) + \frac{i}{e} \}.\] 
Then the function $v_K(\cdot,\cdot)$ defines a non-Archimedean metric on 
$E_K^e$ (\emph{cf}.\ Lem.\ \ref{DistanceEisenstein}). 
For any $g \in E_K^e$, 
we put $M_g=K(\pi_g)$, where $\pi_g$ is a root of $g$. 
For any real number $m \geq 0$, 
we consider the following property: 
\begin{quote}
\begin{itemize}
\item[$\Tem$] \emph{For any $g \in E_K^e$, 
if $v_K(f,g) \geq m$, 
then there exists a $K$-isomorphism $L \cong M_g$}.
\end{itemize}
\end{quote}
This property does not depend on the choice of $\pi_L$ 
(\emph{cf}.\ Prop.\ \ref{Tem=Pem}). 
Let $u_{L/K}$ be the largest upper numbering ramification break of 
$L/K$ in the sense of \cite{Fontaine85} (\emph{cf}.\ Sect.\ \ref{Ramification}). 
Then we can prove the following (Prop.\ \ref{KrasnerLemmaRenewB}): 
\begin{proposition}
The property $\Tem$ is true for $m > u_{L/K}$, 
and is not true for $m \leq u_{L/K} - e^{-1}$. 
\end{proposition}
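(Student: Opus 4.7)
My plan is to prove the two halves of the proposition separately: a quantitative Krasner-type bound for the positive direction, and an explicit perturbation construction for the negative direction.

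For the first half, suppose $g \in E_K^e$ satisfies $v_K(f,g) \geq m > u_{L/K}$. Since both $f$ and $g$ are monic of degree $e$ and $f(\pi_L) = 0$, the definition of the metric yields
\[
v_K(g(\pi_L)) = v_K\!\left(\sum_{i=0}^{e-1}(b_i - a_i)\pi_L^i\right) \geq v_K(f,g) \geq m.
\]
Factoring $g(X) = \prod_{j=1}^{e}(X - \alpha_j)$ over $\bar K$ and noting that each $v_K(\pi_L - \alpha_j) \geq 1/e$ because $g$ is Eisenstein, I would argue that the nearest root $\alpha_0$ of $g$ to $\pi_L$ is so close that Krasner's lemma applies, giving $L = K(\pi_L) \subseteq K(\alpha_0) = M_g$ and hence $M_g \cong L$ by comparing degrees. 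The crux is to pass from the metric bound $v_K(f,g) \geq m$ to an inequality of the form $v_K(\pi_L - \alpha_0) > v_K(\pi_L - \sigma\pi_L)$ for all $\sigma \in \Gal(L/K) \setminus \{1\}$; this invokes Fontaine's intrinsic description of the upper-numbering filtration rather than merely averaging the $v_K(\pi_L - \alpha_j)$.

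For the second half, I would construct a polynomial $g \in E_K^e$ with $v_K(f,g) \geq m := u_{L/K} - 1/e$ and $M_g \not\cong L$. The natural candidate is $\pi' := \pi_L + \varepsilon$ for a suitable $\varepsilon \in \bar K$ chosen at the valuation level matching $m$, where $\varepsilon$ is designed to detect the top ramification jump of $L/K$ so that $K(\pi')$ is forced to differ from $L$. Letting $g$ be the minimal polynomial of $\pi'$ over $K$, the distance $v_K(f,g)$ can be computed from the resultant identity $f(\pi') = \prod_\sigma(\pi' - \sigma\pi_L)$ together with the Newton polygon of $f - g$, while the critical-level choice of $\varepsilon$ enforces $K(\pi') \not\cong L$.

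The principal obstacle is the sharp quantitative bound in the first half. A direct application of Krasner's lemma only yields $\Tem$ for $m > \psi_{L/K}(u_{L/K}) + 1$, which strictly exceeds $u_{L/K}$ whenever ramification is wild; the gap is precisely the difference between lower and upper numbering. Bridging this gap requires exploiting Fontaine's intrinsic formulation of the upper filtration --- through the Galois action on quotients of $\cO_{\bar K}$ at level $m$, or via an inductive argument along successive ramification breaks --- rather than passing through lower numbering.
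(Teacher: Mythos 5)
The proposal correctly identifies that Krasner's lemma is the engine of the first half and that some bridge is needed between the metric $v_K(f,g)$ and the distances $v_K(\pi_g - \sigma\pi_L)$, but it does not actually supply that bridge — and the diagnosis of where to find it is wrong. You assert that the needed tool is ``Fontaine's intrinsic formulation of the upper filtration \ldots rather than passing through lower numbering,'' but the paper's proof does precisely the opposite: it passes through lower numbering, using the sharp Herbrand-type identity (Fontaine, \emph{Il n'y a pas de vari\'et\'e ab\'elienne sur $\mathbb Z$}, Prop.~1.4; the paper's Lemma~\ref{Herbrand}), which states $v_K(f(\beta)) = \ff_{L/K}\bigl(\sup_{\sigma} v_K(\sigma\alpha - \beta)\bigr)$. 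Applied with $\beta = \pi_g$, this converts $v_K(f,g) > u_{L/K}$ exactly into $\sup_\sigma v_K(\pi_g - \sigma\pi_L) > \fp_{L/K}(u_{L/K}) = i_{L/K}$, after which Krasner applies directly. Your observation that naive averaging over the $e$ roots loses too much (roughly giving $m > ei_{L/K}$) is correct, but the remedy is an elementary Newton-polygon computation via the Herbrand function, not the intrinsic $G$-action on quotients $\cO_{\bar K}/\fa^m$.

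The second half is also underdeveloped. Your plan to set $\pi' := \pi_L + \varepsilon$ with $\varepsilon$ at a ``critical level'' and then argue $K(\pi') \not\cong L$ does not explain why the field would actually differ, nor does it control $v_K(f,g)$ for the minimal polynomial $g$ of $\pi'$. The paper's Lemma~\ref{Example} needs no such construction: it shows that for \emph{any} $g \in E_K^e$ with $v_K(f,g) = u_{L/K} - e^{-1}$, one computes (again via Lemma~\ref{Herbrand}) that $v_L(\pi_g - \sigma_0\pi_L) = ei_{L/K} - 1/d$ for some $\sigma_0 \in G$, where $d = \sharp G^{(u_{L/K})} > 1$; this quantity is never an integer, so $\pi_g \notin L$ and hence $M_g \not\cong L$. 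That non-integrality argument is the missing ingredient. (Producing one such $g$ by shifting a single coefficient of $f$ is routine.) In short: right overall strategy for part (i), but the key lemma is absent and misidentified; part (ii) needs the valuation-parity obstruction rather than an ad hoc perturbation.
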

This proposition is a consequence of 
results of Fontaine on a certain property 
$\Pm$ (\emph{cf}.\ Appendix). 
Since both $v_K(f,g)$ and $u_{L/K}$ are in $e^{-1} \mathbb{Z}$, 
the truth of $\Tem$ is constant for $u_{L/K}-e^{-1} < m \leq u_{L/K}$.  
Therefore, we want to know the truth of $\Tem$ for $m=u_{L/K}$. 
The property $\Tem$ behaves mysteriously at the break $m=u_{L/K}$. 
It depends on the ramification of $L/K$ and the residue field $k$. 
Our main theorem in this paper is the following (Cor.\ \ref{CorMainTheoremB}): 

\bn
{\bf Theorem A}. 
\emph{If $L/K$ is tamely ramified, then $\Tem$ is true for $m=u_{L/K}$ 
if and only if the residue field $k$ has no cyclic extension of degree $e$. 
If $L/K$ is wildly ramified, 
then $\Tem$ is true for $m=u_{L/K}$ if and only if 
the residue field $k$ has no cyclic extension of degree $p$}. 

\bn
We reduce the proof of this theorem to the abelian case 
by showing that $\Tem$ is equivalent to a certain 
property $\Pem$, which has such a reduction property (Prop.\ \ref{reduce}). 
To prove the abelian case, we show that, 
by using the properties of the ultrametric space $E_K^e$, 
the truth of the property $\Tem$ for 
$m=u_{L/K}$ is equivalent to the surjectivity of the norm map 
$N_{m-1}:U_L^{\psi(m-1)}/U_L^{\psi(m-1)+1} \to U_K^{m-1}/U_K^m$ 
between the graded quotients of the higher unit groups of $L$ and $K$, 
where $\psi$ is the Hasse-Herbrand function of $L/K$ (Prop.\ \ref{ProofAbelianCase}). 
Finally, we calculate its cokernel by using the well-known exact sequence 
(Prop.\ \ref{CokerNorm})
\[0 \to G_{\psi(m-1)}/G_{\psi(m-1)+1} \to U_L^{\psi(m-1)}/U_L^{\psi(m-1)+1} 
\overset{N_{m-1}}{\longrightarrow} U_K^{m-1}/U_K^m, \]
where $G_{i}$ is the $i$th lower numbering ramification group 
in the sense of \cite{Serre79} (\emph{cf}.\ Rem.\ \ref{Numbering}). 
The vanishing of Coker($N_{m-1}$) is equivalent to the conditions 
in Theorem A.  

Our results are useful for computations to 
construct explicit extensions over $K$ which satisfy given conditions. 
For example, such computations are required 
in \cite{Suzuki}, \cite{Yoshida11} and \cite{Yoshida12}. 
Indeed, the proof of Proposition 5.1,\ (1) in \cite{Suzuki} is based on our results. 
In \cite{Yoshida11} and \cite{Yoshida12}, 
our approaches are used to identify totally ramified extensions over $\mathbb{Q}_p$.

\bn
\emph{Plan of this paper}. 
In Section \ref{Ramification}, 
we give a review of the classical ramification theory. 
In Section \ref{Distance}, 
we recall a notion of ultrametric space on polynomials. 
In Section \ref{ThePropertyTem}, 
we define the property $\Tem$, 
which is the main object in this paper. 
In Section \ref{MainTheorem}, 
we state our main theorem and its consequences. 
In Section \ref{ProofMainTheorem}, 
we prove the main theorem. 
In the Appendix, we consider similar properties $\PPm$ and $\Pm$. 
To remove confusion, 
we clarify the relation between the four properties which appear in this paper:
\[ \Pm \iff \PPm \Longrightarrow \Pem \iff \Tem, \]
where the last equivalence requires the condition $m>1$.

\bn
\emph{Notations}. 
We fix an algebraic closure $\Omega$ of $K$ and denote by $\bar{K}$ the 
separable closure of $K$ in $\Omega$.
We denote by $\cO_K$, $\mm_K$, $\pi_K$ and $v_K$,
respectively, the valuation ring of $K$, its maximal ideal,
a uniformizer of $K$ and the valuation on $K$ normalized by $v_K(K^{\times})=\mZ$. 
We assume throughout that all algebraic extensions of $K$ under discussion 
are contained in $\Omega$. 
The valuation $v_K$ of $K$ extends to $\Omega$ 
uniquely and the extension is also denote by $v_K$. 
If $M$ is an algebraic extension of $K$, 
then we denote by $\cO_M$ 
the integral closure of $\cO_K$ in $M$, and by $\mm_M$ the maximal ideal of $\cO_M$. 
For any integer $n \geq 1$, 
we put $U_K^n=1+\mm_K^n$ and $U_L^n=1+\mm_L^n$. 
Put $U_K^0=\cO_K^{\times}$ and 
$U_L^0=\cO_L^{\times}$ by convention.

\bn
\emph{Conventions}. 
Throughout this paper, 
we assume that $L/K$ is unferociously ramified\footnote{
We mean by an \emph{unferociously} ramified extension 
an algebraic extension whose residue field extension is separable. 
} extension. 
We do not consider the trivial case $L=K$. 

\bn
\emph{Acknowledgments}.\ 
The author thanks Takashi Suzuki for communicating 
Proposition \ref{CokerNorm} to him. 
He also thanks Kazuya Kato for suggesting an improvement of the proof 
of Lemma \ref{hominjective}. 
Finally, he thanks Yuichiro Taguchi for pointing out Remark \ref{separability}.

\section{Ramification theory}\label{Ramification}

\quad In this section, we recall the classical ramification theory for 
Galois extensions of $K$. 
Our notations are based on \cite{Fontaine85}, Section 1. 
Let $L$ be a finite Galois extension of $K$ with Galois group $G$. 
There exists an element $\alpha \in \cO_L$ such that $\cO_L = \cO_K[\alpha]$ 
(the existence of such an element is proved in 
\cite{Serre79}, Chap.\ III, Sect.\ 6, Prop.\ 12). 
The order function $\ii_{L/K}$ is defined on $G$ by 
\[\ii_{L/K}(\sigma)=\inf_{a \in \cO_L}
 v_K(\sigma(a)-a) = v_K(\sigma \alpha - \alpha)\]
for any $\sigma \in G$. 
Then the $i$th \emph{lower numbering ramification group $G_{(i)}$ of $G$} 
are defined for a real number $i \geq 0$ by 
\[G_{(i)}=\{ \sigma \in G\ |\ \ii_{L/K}(\sigma) \geq i \}.\]
The transition function $\ff_{L/K}:\mathbb{R}_{\geq 0} \to 
\mathbb{R}_{\geq 0}$ of $L/K$ is defined by 
\[\ff_{L/K}(i)=\int_0^i \sharp G_{(t)} dt\]
for any real number $i \geq 0$, 
where $\sharp G_{(t)}$ is the cardinality of $G_{(t)}$. 
This is a piecewise-linear, monotone increasing function, mapping the interval $[0,+\infty)$ 
onto itself. Its inverse function is denoted by $\fp_{L/K}$. 
The following lemma is a fundamental property of these functions:
\begin{lemma}[\cite{Fontaine85}, Prop.\ 1.4]\label{Herbrand}
Let $L$ be a finite Galois extension of $K$. 
Let $f$ be the minimal polynomial of $\alpha$ over $K$ and $\beta$ an element of $\Omega$. 
Put $i = \sup_{\sigma \in G} v_K(\sigma (\alpha) - \beta)$ and 
$u= v_K(f(\beta))$. 
Then we have
\[ u = \ff_{L/K}(i),\quad \fp_{L/K}(u)=i. \]
\end{lemma}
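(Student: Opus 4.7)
The plan is to compute $u = v_K(f(\beta))$ directly by factoring $f$ over $\Omega$, and recognize the resulting sum as the integral defining $\ff_{L/K}(i)$.

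First, since $L/K$ is Galois with $L=K(\alpha)$ and $|G|=[L:K]=\deg f$, the minimal polynomial factors as $f(X)=\prod_{\sigma \in G}(X-\sigma\alpha)$. Consequently
\[ u = v_K(f(\beta)) = \sum_{\sigma \in G} v_K(\sigma\alpha - \beta). \]
If $\beta$ is a conjugate of $\alpha$, both sides are infinite and the identification with $\ff_{L/K}(i)$ is immediate (since $\ff_{L/K}(t) \to \infty$); so I may assume $i < \infty$ and fix $\sigma_0 \in G$ attaining the supremum $v_K(\sigma_0 \alpha - \beta) = i$.

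The main computational step is the identity
\[ v_K(\sigma\alpha - \beta) = \min\bigl(\ii_{L/K}(\sigma_0^{-1}\sigma),\, i\bigr) \quad \text{for every } \sigma \in G. \]
To prove this, write $\sigma\alpha - \beta = (\sigma\alpha - \sigma_0\alpha) + (\sigma_0 \alpha - \beta)$ and note that since $v_K$ is $G$-invariant,
\[ v_K(\sigma\alpha - \sigma_0\alpha) = v_K(\sigma_0^{-1}\sigma(\alpha) - \alpha) = \ii_{L/K}(\sigma_0^{-1}\sigma). \]
An ultrametric case-analysis against $v_K(\sigma_0\alpha - \beta)=i$ (splitting according to whether $\ii_{L/K}(\sigma_0^{-1}\sigma)$ is greater than, equal to, or less than $i$, and using the defining maximality of $i$ in the equality case) yields the identity. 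This case analysis is the main obstacle: one has to verify that the case $\ii_{L/K}(\sigma_0^{-1}\sigma) \geq i$ does not produce a value of $v_K(\sigma\alpha - \beta)$ strictly exceeding $i$, which is exactly where the choice of $\sigma_0$ as a maximizer is used.

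Once the identity is established, I substitute $\tau = \sigma_0^{-1}\sigma$ to obtain
\[ u = \sum_{\tau \in G} \min\bigl(\ii_{L/K}(\tau),\, i\bigr). \]
Using $\min(a,i) = \int_0^i \mathbf{1}_{\{t < a\}}\,dt$ and interchanging sum and integral gives
\[ u = \int_0^i \#\{\tau \in G : \ii_{L/K}(\tau) > t\}\, dt = \int_0^i \sharp G_{(t)}\, dt = \ff_{L/K}(i), \]
where the middle equality holds because $\ii_{L/K}$ is integer-valued, so replacing $>$ by $\geq$ changes the integrand on a set of measure zero. Finally, the relation $\fp_{L/K}(u) = i$ follows by applying the inverse of the monotone bijection $\ff_{L/K}$.
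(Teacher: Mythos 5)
The paper states this lemma as a citation from Fontaine (\cite{Fontaine85}, Prop.\ 1.4) and does not prove it, so there is nothing internal to compare against; you have supplied a self-contained argument, and it is correct. Your route is the standard one: factor $f(X)=\prod_{\sigma\in G}(X-\sigma\alpha)$ over $\Omega$, reduce to the ultrametric identity $v_K(\sigma\alpha-\beta)=\min\bigl(\ii_{L/K}(\sigma_0^{-1}\sigma),\,i\bigr)$, then recognize $\sum_{\tau\in G}\min(\ii_{L/K}(\tau),i)$ as $\int_0^i \sharp G_{(t)}\,dt=\ff_{L/K}(i)$. The case analysis is sound: when $\ii_{L/K}(\sigma_0^{-1}\sigma)\neq i$ the strict-inequality form of the ultrametric equality already pins down $v_K(\sigma\alpha-\beta)$, and the maximality of $i$ is needed exactly in the equality case to cap the valuation from above, as you say. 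One small slip worth flagging: with the paper's normalization $v_K(K^\times)=\mathbb{Z}$, the function $\ii_{L/K}$ takes values in $e^{-1}\mathbb{Z}$, not $\mathbb{Z}$, so it is not integer-valued as you assert; but this is harmless, since all your argument actually needs is that $\ii_{L/K}$ takes only finitely many values, so the set where $\mathbf{1}_{\{t<\ii_{L/K}(\tau)\}}$ and $\mathbf{1}_{\{t\leq\ii_{L/K}(\tau)\}}$ disagree has measure zero and $\int_0^i\#\{\tau:\ii_{L/K}(\tau)>t\}\,dt=\int_0^i\sharp G_{(t)}\,dt$.
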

We define the order function $\uu_{L/K}$ on $G$ by 
\[\uu_{L/K}(\sigma)=\ff_{L/K}(\ii_{L/K}(\sigma)) \]
for any $\sigma \in G$. 
Then the $u$th \emph{upper numbering ramification group $G^{(u)}$ of $G$} 
are defined for a real number $u \geq 0$ by 
\[G^{(u)}=\{\sigma \in G\ |\ \uu_{L/K}(\sigma) \geq u \}.\]
\begin{remark}\label{Numbering}
Denote by $G_i$, $G^u$, $\varphi_{L/K}$ and $\psi_{L/K}$, 
respectively, the $i$th lower numbering ramification group, 
the $u$th upper numbering ramification group, 
the transition function and its inverse function in the sense of 
\cite{Serre79}, Chapter IV. 
The relation between our notations and those of \cite{Serre79} is 
the following: For any real number $i, u \geq -1$, we have
\[ G_i = G_{((i+1)/e)},\quad G^u = G^{(u+1)} \]
and
\[ \varphi_{L/K}(i) = \ff_{L/K}((i+1)/e)-1,\quad 
\psi_{L/K}(u) = e \fp_{L/K}(u+1)-1, \]
where $e$ is the ramification index of $L/K$. 
\end{remark}
Denote the largest lower (resp.\ upper) numbering ramification break by 
\[ i_{L/K}=\inf \{ i \in \mathbb{R}\ |\ G_{(i)}=1 \},\quad 
u_{L/K}=\inf \{ u \in \mathbb{R}\ |\ G^{(u)}=1 \}. \]
The graded quotients of $(G^{(u)})_{u \geq 1}$ are abelian 
and killed by $p$ (\cite{Serre79}, Chap.\ IV, Sect.\ 2, Cor.\ 3). 
In particular, $G^{(u)}$ is abelian and killed by $p$ for $u=u_{L/K}$ 
if $L/K$ is wildly ramified. 

We assume that $L/K$ is totally ramified.
If there is no confusion, we write $\psi=\psi_{L/K}$ for simplicity. 
\begin{proposition}[\cite{Serre79}, Chap.\ V, Prop.\ 8]
For any integer $n \geq 0$, $N(U_L^{\psi(n)}) \subset U_K^n$ 
and $N(U_L^{\psi(n)+1}) \subset U_K^{n+1}$,
where $N:=N_{L/K}$ is the norm map.
\end{proposition}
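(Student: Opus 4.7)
The plan is to reduce to the case where $L/K$ is cyclic of prime degree via the transitivity of the norm map and of the Hasse--Herbrand function, and then carry out a direct computation of $N$ on principal units. For any intermediate Galois extension $K \subset K' \subset L$, one has $N_{L/K} = N_{K'/K} \circ N_{L/K'}$ and $\psi_{L/K} = \psi_{L/K'} \circ \psi_{K'/K}$, so that if both inclusions hold for $L/K'$ and for $K'/K$ separately, they hold for $L/K$ by chaining: substituting $m = \psi_{K'/K}(n)$ into $N_{L/K'}(U_L^{\psi_{L/K'}(m)}) \subset U_{K'}^{m}$ and applying $N_{K'/K}$ yields the required inclusion. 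Since $\Gal(L/K)$ is solvable in the totally ramified case (its wild inertia is a $p$-group and its tame quotient is cyclic of order prime to $p$), a composition series with cyclic prime quotients reduces the proposition to the case $[L:K]$ prime.

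In the tame case $[L:K] = \ell$ coprime to $p$, one has $\psi(n) = \ell n$. Expanding $N(1+y) = \prod_{\sigma \in G}(1 + \sigma y) = 1 + \sum_{i=1}^{\ell} s_i$, where $s_i \in K$ is the $i$-th elementary symmetric polynomial in the conjugates of $y$: for $v_L(y) \geq \ell n$ one gets $v_L(s_i) \geq i\ell n$, hence $v_K(s_i) \geq in \geq n$, so $N(1+y) \in U_K^n$. If moreover $v_L(y) \geq \ell n + 1$, then $v_K(s_i) > n$ for each $i \geq 1$, and since $v_K(N(1+y) - 1) \in \mZ$, we deduce $N(1+y) \in U_K^{n+1}$.

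The main obstacle is the wild case $[L:K] = p$ cyclic, with unique ramification break $b$ in Serre's convention; here $\psi(n) = n$ for $n \leq b$ and $\psi(n) = b + p(n-b)$ for $n \geq b$. The essential input is the estimate $v_L(\sigma y - y) \geq v_L(y) + b$ for $\sigma \in G \setminus \{1\}$, which follows from $i_G(\sigma) = b + 1$. For $n \leq b$, the symmetric-function expansion of the tame case, refined by this estimate, suffices. For $n > b$ one must track more carefully the contributions of $\mathrm{Tr}(y)$ and of the higher symmetric functions to $N(1+y)$, using that on a sufficiently deep ideal the trace gains a factor of $p$; the delicate point is to verify that this $p$-divisibility of the trace exactly compensates for the jump $\psi(n) = b + p(n-b)$, landing the norm in $\mm_K^n$ (resp.\ $\mm_K^{n+1}$) as required.
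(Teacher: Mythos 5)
The paper cites this proposition directly from Serre (Chap.\ V, Prop.\ 8) and does not prove it, so there is no ``paper's own proof'' to compare against line by line; the right benchmark is Serre's argument, and your plan does follow its skeleton (reduce to cyclic prime degree via transitivity of $N$ and $\psi$, then compute). The reduction step is sound, as is the tame case: for $[L:K]=\ell$ prime to $p$, expanding $N(1+y)=1+\sum_i s_i$ and bounding $v_K(s_i)$ by the integrality of $v_K$ on $K$ gives both inclusions. The estimate $v_L(\sigma y-y)\geq v_L(y)+b$ for $\sigma\neq 1$ in the cyclic degree-$p$ case is also correct.

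However, there is a genuine gap: the cyclic degree-$p$ case for $n>b$ is not actually proved. You correctly identify that this is where the jump $\psi(n)=b+p(n-b)$ must be matched by a gain of a factor $p$ coming from the trace, but the proposal ends by saying ``the delicate point is to verify'' this compensation, without verifying it. That delicate point \emph{is} the content of the proposition in the wild case; it is not a routine bookkeeping step. Concretely, one needs the precise behaviour of the trace, namely that $\mathrm{Tr}_{L/K}(\mathfrak{m}_L^r)=\mathfrak{m}_K^{\lceil (r+d)/p\rceil}$ where $d=(p-1)(b+1)$ is the valuation of the different, and then one must also control the non-linear terms $s_2,\dots,s_p$ of the norm polynomial, including the top term $s_p=\prod_\sigma \sigma y$ whose contribution is of a completely different nature from the trace. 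Serre handles this via an explicit expansion (Chap.\ V, \S 3, Lemmas 4 and 5) and the computation occupies several pages. Until that analysis is carried out, the claimed inclusions $N(U_L^{\psi(n)})\subset U_K^n$ and $N(U_L^{\psi(n)+1})\subset U_K^{n+1}$ for $n>b$ in the wild case remain unproved, and this is the heart of the proposition.
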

This proposition allows us, 
by passage to the quotient, 
to define the homomorphisms
\[ N_n:U_L^{\psi(n)}/U_L^{\psi(n)+1} \to U_K^n/U_K^{n+1} \quad (n \geq 0). \]
The homomorphism $N_n$ is a non-constant polynomial map 
(\cite{Serre79}, Chap.\ V, Sect.\ 6, Prop.\ 9). 
\begin{proposition}[\cite{Serre79}, Chap.\ V, Sect.\ 6, Prop.\ 9]\label{exact}
For any integer $n \geq 0$, the following sequence is exact:
\[0 \to G_{\psi(n)}/G_{\psi(n)+1} \onto{\theta_n} U_L^{\psi(n)}/U_L^{\psi(n)+1} 
\overset{N_n}{\longrightarrow} U_K^n/U_K^{n+1}, \]
where $\theta_n$ is defined by $\sigma \mapsto \sigma(\pi_L)/\pi_L$. 
\end{proposition}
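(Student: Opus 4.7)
The plan is to verify the three facts constituting exactness of the sequence: (i) $\theta_n$ is a well-defined injective homomorphism; (ii) $N_n\circ\theta_n$ is trivial; and (iii) $\ker N_n \subseteq \mathrm{Im}\,\theta_n$. I expect (i) and (ii) to be essentially formal, with the main work concentrated in (iii).

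For (i), I would set $u_\sigma := \sigma(\pi_L)/\pi_L$. Since $L/K$ is totally and unferociously ramified, $\cO_L = \cO_K[\pi_L]$, so the lower numbering is controlled entirely by the action on $\pi_L$, giving the chain of equivalences $\sigma \in G_i \iff \sigma(\pi_L)-\pi_L \in \mm_L^{i+1} \iff u_\sigma \in U_L^i$. Specializing at $i=\psi(n)$ and $i=\psi(n)+1$ simultaneously yields well-definedness on the quotient and injectivity of $\theta_n$. The homomorphism property follows from the cocycle identity $u_{\sigma\tau} = \sigma(u_\tau)\cdot u_\sigma$ combined with the fact that any $\sigma\in G_{\psi(n)}$ acts trivially on $U_L^{\psi(n)}/U_L^{\psi(n)+1}$, being trivial on $\cO_L$ modulo $\mm_L^{\psi(n)+1}$. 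Part (ii) is immediate: $N(u_\sigma)=N(\sigma(\pi_L))/N(\pi_L)=1$ because the Galois action permutes the conjugates of $\pi_L$.

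Part (iii) is the crux, where I expect the main difficulty. My approach would be cardinality matching. Both $U_L^{\psi(n)}/U_L^{\psi(n)+1}$ and $U_K^n/U_K^{n+1}$ are isomorphic to $k$ as additive groups when $\psi(n)\geq 1$, or to $k^\times$ when $\psi(n)=0$, and $N_n$ is a non-constant polynomial map, as recalled just before the proposition. Writing $N(1+x\pi_L^{\psi(n)})$ modulo $U_K^{n+1}$ explicitly as a polynomial in $x\in k$ and reading off its degree from the slope-data of $\psi_{L/K}$ at $n$ (which is controlled by the orders of the ramification groups $G_{\psi(n)}$ and $G_{\psi(n)+1}$), one identifies $|\ker N_n|$ with $|G_{\psi(n)}/G_{\psi(n)+1}|$. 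Combined with the injectivity of $\theta_n$ from (i) and the containment from (ii), this forces $\mathrm{Im}\,\theta_n = \ker N_n$ by finiteness. The explicit polynomial takes different shapes at tame versus wild breaks (Kummer-like vs.\ Artin--Schreier-like), but the counting conclusion is uniform across both regimes, which closes the argument.
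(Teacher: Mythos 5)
The paper offers no proof of this proposition: it is cited verbatim from \cite{Serre79}, Chap.\ V, Sect.\ 6, Prop.\ 9, so there is no internal argument to compare against. What follows is therefore an assessment of your proposal on its own.

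Parts (i) and (ii) are correct and are exactly the formal content one expects. The issue is part (iii). You reduce the problem to showing $|\ker N_n| \leq [G_{\psi(n)}:G_{\psi(n)+1}]$ and want to obtain this from the degree of $N_n$ as a polynomial, ``read off from the slope-data of $\psi_{L/K}$.'' This does not hold at the level of naive degree, and the gap is visible already at $n=0$: there $N_0\colon k^\times\to k^\times$ is $x\mapsto x^e$ (every conjugate of a unit reduces to the same residue, so the norm on residues is the $e$th power), whose degree is $e$, while $[G_{(0)}:G_{(1)}]=e_0$ is only the prime-to-$p$ part of $e$. The bound $|\ker N_0|\leq\deg N_0=e$ is therefore too weak; one has to factor $X^e=(X^{e_0})^{p^a}$ and invoke injectivity of the $p$-power Frobenius to get $|\ker N_0|\leq e_0$. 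A similar phenomenon occurs in the additive regime $\psi(n)\geq 1$: $N_n$ is a $p$-polynomial $\sum a_i X^{p^i}$ whose ``visible'' degree overshoots the jump $[G_{\psi(n)}:G_{\psi(n)+1}]$, and only its separable part controls the kernel. Controlling that separable degree is precisely the non-trivial content here, and it is not something that can be read off from the Hasse--Herbrand slope alone; it is established by the explicit norm computations in Serre's Chap.\ V, Sects.\ 3--5 (first for cyclic extensions of prime degree, then by d\'evissage through the ramification filtration for the general solvable $G$). In effect, the step you label as ``reading off the degree'' \emph{is} the proposition; one cannot obtain it for free. The counting framework you set up (image of $\theta_n$ bounds the kernel from below, a polynomial bound from above, finiteness closes the gap) is the right scaffolding, but without the norm computation it does not yield a proof. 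Note also that the paper's own Remark \ref{separability} derives separability of $N_n$ \emph{from} the exactness of this sequence, not the reverse, so one cannot appeal to separability as an input here without becoming circular.
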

\begin{remark}\label{separability}
The polynomial $N_n$ is separable since $\theta_n$ is injective. 
Hence if the residue field $k$ is separably closed, then we have 
\[ 0 \to G_{\psi(n)}/G_{\psi(n)+1} \onto{\theta_n} U_L^{\psi(n)}/U_L^{\psi(n)+1} 
\overset{N_n}{\longrightarrow} U_K^n/U_K^{n+1} \to 0. \]
\end{remark}

\section{An ultrametric space of monic irreducible polynomials}\label{Distance}

\quad In this section, we define a non-Archimedean metric 
on the set, denoted by $P_K$, of all monic irreducible polynomials over $K$. 
For $f,g \in P_K$, we denote by $\res(f,g)$ the resultant of $f$ and $g$. 
Then $v_K(\res(\cdot,\cdot))$ defines a non-Archimedean metric on $P_K$ 
(see \cite{Krasner66} or \cite{Pauli01}, Sect.\ 4 for the proofs). 
It is well-known that 
\[ v_K(\res(f,g)) = 
\sum_{i,j} v_K(\alpha_i - \beta_j) = \mathrm{deg}(f) v_K(f(\beta)) = 
\mathrm{deg}(g) v_K(g(\alpha)) , \]
where $\alpha_i$ (resp.\ $\beta_j$) runs through the all roots of 
$f$ (resp.\ $g$) and $\alpha$ (resp.\ $\beta$) is a root of $f$ (resp.\ $g$). 
The third and forth presentations are independent of the choice of 
roots by the irreducibility of polynomials. 
Denote by $E_K^e$ the set of all Eisenstein polynomials over $K$ of degree $e$. 
For $f$, $g \in E_K^e$, we put 
\[ v_K(f,g)=e^{-1} v_K(\res(f,g))=v_K(f(\pi_g)), \]
where the last equality follows from the above equality. 
Then the function $v_K(\cdot,\cdot)$ also defines a non-Archimedean metric on $E_K^e$. 
There is a useful formula to calculate the metric on $E_K^e$: 
\begin{lemma}[\cite{Krasner66}]\label{DistanceEisenstein}
Let $f,g \in E_K^e$.
Write $f(X)$ $=$ $X^e$ $+$ $a_{e-1}X^{e-1}$ $+$ $\cdots$ $+$ $a_0$ and 
$g(X)$ $=$ $X^e$ $+$ $b_{e-1}X^{e-1}$ $+$ $\cdots$ $+$ $b_0$.
Then we have
\[ v_K(f,g) = \min_{0 \leq i \leq e-1} \{ v_K(a_i - b_i) + \frac{i}{e} \}. \]
\end{lemma}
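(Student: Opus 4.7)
The plan is to compute $v_K(f(\pi_g))$ directly, where $\pi_g$ is a root of $g$, and exploit the fact that the exponents $i/e$ for $0 \le i \le e-1$ all lie in distinct residue classes modulo $\mathbb{Z}$.

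First I would observe that since $g(\pi_g) = 0$ and the leading terms $X^e$ of $f$ and $g$ cancel, we have
\[ f(\pi_g) \;=\; f(\pi_g) - g(\pi_g) \;=\; \sum_{i=0}^{e-1} (a_i - b_i)\,\pi_g^{\,i}. \]
Next, because $g$ is Eisenstein of degree $e$, any of its roots $\pi_g$ is a uniformizer of $K(\pi_g)$, so $v_K(\pi_g) = 1/e$, whence $v_K((a_i - b_i)\pi_g^i) = v_K(a_i - b_i) + i/e$ for each $i$ (with the convention $\infty + i/e = \infty$ if $a_i = b_i$). Applying the ultrametric inequality gives the easy direction
\[ v_K(f,g) \;=\; v_K(f(\pi_g)) \;\ge\; \min_{0 \le i \le e-1} \Bigl\{ v_K(a_i - b_i) + \tfrac{i}{e} \Bigr\}. \]

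For the reverse inequality, the key point is that the values $v_K(a_i - b_i) + i/e$ for $0 \le i \le e-1$ are pairwise distinct whenever they are finite. Indeed, $v_K(a_i - b_i) \in \mathbb{Z} \cup \{\infty\}$, so an equality $v_K(a_i - b_i) + i/e = v_K(a_j - b_j) + j/e$ would force $i \equiv j \pmod{e}$, hence $i = j$ in the allowed range. Therefore the minimum in the sum defining $f(\pi_g)$ is attained by a unique term, and the standard strict ultrametric inequality yields
\[ v_K(f(\pi_g)) \;=\; \min_{0 \le i \le e-1} \Bigl\{ v_K(a_i - b_i) + \tfrac{i}{e} \Bigr\}, \]
which is the claimed formula.

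There is no real obstacle here; the proof is a direct calculation, and the only subtlety worth flagging is the distinctness argument that upgrades the ultrametric inequality to an equality. The same reasoning shows in passing that the right-hand side is independent of the choice of root $\pi_g$, consistent with the definition of $v_K(f,g)$ given before the lemma.
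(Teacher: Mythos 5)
Your proof is correct. The paper itself does not prove Lemma \ref{DistanceEisenstein}; it cites Krasner for it. Your argument is the standard one: subtract $g(\pi_g)=0$ to write $f(\pi_g)=\sum_{i=0}^{e-1}(a_i-b_i)\pi_g^i$, use $v_K(\pi_g)=1/e$ (since $\pi_g$ is a uniformizer of the totally ramified degree-$e$ extension $K(\pi_g)/K$), and then note that the candidate valuations $v_K(a_i-b_i)+i/e$ lie in distinct residue classes of $e^{-1}\mathbb{Z}$ modulo $\mathbb{Z}$, so that the strict case of the ultrametric inequality applies and yields equality. The identification $v_K(f,g)=v_K(f(\pi_g))$ that your computation targets is exactly the definition given in Section \ref{Distance}. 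The only formality you might add is the degenerate case $f=g$, where both sides are $+\infty$; otherwise at least one coefficient differs, the minimum is finite and attained at a unique index, and the argument goes through.
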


\section{The property $\Tem$}\label{ThePropertyTem}

In this section, we define the property $\Tem$ and 
determine the truth for any real number 
$m \geq 0$ except a neighborhood of the break $u_{L/K}$. 
The proofs in this section essentially depend on \cite{Fontaine85}, 
Proposition 1.5. 
Let $L/K$ be a finite Galois totally ramified extension of degree $e$. 
Take a uniformizer $\pi_L$ of $L$. 
Let $f \in E_K^e$ be the minimal polynomial of $\pi_L$ over $K$. 
For any $g \in E_K^e$, we put $M_g=K(\pi_g)$, 
where $\pi_g$ is a root of $g$. 
For any real number $m \geq 0$, 
we consider the following property:
\begin{quote}
\begin{itemize}
\item[$\Tem$] For any $g \in E_K^e$, 
if $v_K(f,g) \geq m$, 
then there is a $K$-isomorphism $L \cong M_g$.
\end{itemize}
\end{quote}
Let $u_{L/K}$ be the upper numbering ramification break of $L/K$. 
Then we have the following:
\begin{proposition}\label{KrasnerLemmaRenewB}
$\mathrm{(i)}$ If $m > u_{L/K}$, then $\Tem$ is true. 

\noindent
$\mathrm{(ii)}$ If $m \leq u_{L/K} - e^{-1}$, 
then $\Tem$ is not true. 
\end{proposition}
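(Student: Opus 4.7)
Both halves flow from Lemma \ref{Herbrand}. Fix $g \in E_K^e$ with a root $\pi_g \in \Omega$. Since $L/K$ is totally ramified, $\cO_L = \cO_K[\pi_L]$, so Lemma \ref{Herbrand} applied with $(\alpha, \beta) = (\pi_L, \pi_g)$ gives
\[
v_K(f, g) \;=\; v_K(f(\pi_g)) \;=\; \ff_{L/K}(i), \qquad i \;:=\; \max_{\sigma \in G} v_K(\sigma\pi_L - \pi_g),
\]
translating the metric hypothesis on $g$ into an approximation statement for $\pi_g$ by the Galois orbit of $\pi_L$.

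\emph{Proof of (i).} If $v_K(f, g) \geq m > u_{L/K}$, apply $\fp_{L/K}$ to get $i > \fp_{L/K}(u_{L/K}) = i_{L/K}$, so some $\sigma \in G$ attains $v_K(\sigma\pi_L - \pi_g) > i_{L/K}$. By the very definition of $i_{L/K}$, each non-trivial $\tau \in G$ satisfies $v_K(\tau\sigma\pi_L - \sigma\pi_L) = \ii_{L/K}(\sigma^{-1}\tau\sigma) \leq i_{L/K}$, hence strictly less than $v_K(\sigma\pi_L - \pi_g)$. Krasner's lemma then yields $L = K(\sigma\pi_L) \subseteq K(\pi_g) = M_g$, and $[L:K] = e = [M_g:K]$ forces $L = M_g$.

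\emph{Proof of (ii).} For $m \leq u_{L/K} - e^{-1}$ the goal is to exhibit a counterexample $g \in E_K^e$ with $v_K(f,g) \geq u_{L/K} - e^{-1}$ but $M_g \not\cong L$. By the boxed formula, it is enough to produce $\beta \in \Omega$ of degree exactly $e$ over $K$ whose minimal polynomial $g$ is Eisenstein, whose distance $i$ from the Galois orbit satisfies $i \geq \fp_{L/K}(u_{L/K} - e^{-1})$ (a value strictly below $i_{L/K}$), and with $K(\beta) \not\cong L$. Conceptually, one picks a second totally ramified degree-$e$ extension $M/K$, not $K$-isomorphic to $L$, whose uniformizer sits at precisely the correct sub-critical distance from some conjugate of $\pi_L$; the ultrametric flexibility of $E_K^e$ (one may freely prescribe higher coefficients modulo suitable powers of $\pi_K$ without losing the Eisenstein condition) ensures such an $M$ and such a uniformizer exist.

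The routine half is (i), which is Krasner's lemma packaged through the Herbrand transition function. The genuine difficulty is (ii): one must simultaneously secure that $g$ is Eisenstein, of the right degree, and generates a non-isomorphic extension. The cleanest route---and the one the author adopts---is to pass through Fontaine's property $\Pm$ studied in the Appendix, where counterexamples are constructed systematically in the broader setting of arbitrary monic integral polynomials, and then to transport them via the implications $\Pm \Longrightarrow \Pem \iff \Tem$ recorded in the introduction.
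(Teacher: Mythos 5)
Part (i) matches the paper's proof: apply Lemma \ref{Herbrand} to translate the metric bound into a statement about the distance of $\pi_g$ from the Galois orbit of $\pi_L$, then invoke Krasner's lemma. That part is correct.

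Part (ii) contains a genuine gap, and in fact two distinct errors. First, your sketch merely \emph{asserts} the existence of the counterexample: you say ``one picks a second totally ramified degree-$e$ extension $M/K$, not $K$-isomorphic to $L$, whose uniformizer sits at precisely the correct sub-critical distance,'' but this is exactly what needs to be proved, and ``ultrametric flexibility'' only lets you prescribe the distance $v_K(f,g)$, not the non-isomorphism $M_g \not\cong L$. Second, the proposed detour through Fontaine's $\Pm$ runs in the wrong logical direction: the paper records $\Pm \Longrightarrow \Pem \iff \Tem$, whose contrapositive gives $\lnot\Tem \Rightarrow \lnot\Pm$, so knowing that $\Pm$ fails for $m \leq u_{L/K}-e^{-1}$ (Proposition \ref{Fontaine}(ii)) tells you nothing about $\Tem$. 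Your claim that this is ``the one the author adopts'' is also incorrect. The paper's actual argument (Lemma \ref{Example}) reverses your strategy: take \emph{any} $g \in E_K^e$ with $v_K(f,g) = u_{L/K}-e^{-1}$ (these exist by perturbing a coefficient of $f$), push the distance through $\fp_{L/K}$ to get
\[
\sup_{\sigma\in G} v_K(\pi_g - \sigma\pi_L) = \fp_{L/K}(u_{L/K}-e^{-1}) = i_{L/K} - \tfrac{1}{ed},
\qquad d := \sharp G^{(u_{L/K})} > 1,
\]
so that $v_L(\pi_g - \sigma_0\pi_L) = e\,i_{L/K} - \tfrac{1}{d}$ is not an integer; but if $M_g = L$ this would be an $L$-valuation of an element of $L$, a contradiction. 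This integrality obstruction is the missing idea; without it, or an explicit construction of a non-isomorphic extension at the prescribed distance, your proof of (ii) does not close.
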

\begin{proof}
(i) By assumption, 
we have $v_K(f,g)=v_K(f(\pi_g)) \geq m > u_{L/K}$. 
According to Lemma \ref{Herbrand}, we have 
\[ \sup_{\sigma \in G} v_K(\pi_g - \sigma (\pi_L)) = \fp_{L/K}(v_K(f(\pi_g))) 
> \fp_{L/K}(u_{L/K}) = i_{L/K}. \] 
Hence there exists $\sigma_0 \in G$ such that 
\[ v_K(\pi_g - \sigma_0 (\pi_L)) > i_{L/K} = 
\sup_{\sigma \not= 1} v_K(\sigma (\pi_L) -\pi_L) 
=\sup_{\sigma \not= 1} v_K(\sigma (\sigma_0 (\pi_L)) - \sigma_0 (\pi_L)). \]
By Krasner's lemma, we have $L \cong K(\sigma_0 (\pi_L)) \subset K(\pi_g)=M_g$. 

\noindent
(ii) This follows from Lemma \ref{Example} below immediately. 
\end{proof}
\begin{lemma}\label{Example}
Let $g \in E_K^e$. 
If $v_K(f,g) = u_{L/K} - e^{-1}$, 
then we have $L \not\cong M_g$. 
\end{lemma}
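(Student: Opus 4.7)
The plan is to argue by contradiction. Suppose $L \cong M_g$, so after choosing a $K$-isomorphism we may regard $\pi_g$ as an element of $L$. Since $v_K(f,g) = u_{L/K} - e^{-1}$ is finite we have $g \neq f$, hence $\pi_g$ is not a root of $f$; consequently $\pi_g - \sigma(\pi_L) \neq 0$ for every $\sigma \in G$. Applying Lemma \ref{Herbrand} with $\alpha = \pi_L$ and $\beta = \pi_g$ yields
\[ v_K(f,g) \;=\; v_K(f(\pi_g)) \;=\; \ff_{L/K}(i^*), \qquad i^* := \sup_{\sigma \in G} v_K(\sigma(\pi_L) - \pi_g). \]
The key observation is that $i^* \in e^{-1}\mathbb{Z}$: each $v_K(\sigma(\pi_L) - \pi_g)$ is the valuation of a nonzero element of $L$, and the supremum is attained because $G$ is finite.

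I would then split into two cases according to how $i^*$ compares with the top lower break $i_{L/K}$. If $i^* \geq i_{L/K}$, the monotonicity of $\ff_{L/K}$ forces $\ff_{L/K}(i^*) \geq \ff_{L/K}(i_{L/K}) = u_{L/K}$, which already contradicts the hypothesis $\ff_{L/K}(i^*) = u_{L/K} - e^{-1}$. If instead $i^* < i_{L/K}$, then since $i_{L/K}$ is attained as $\ii_{L/K}(\sigma_0)$ for some $\sigma_0 \neq 1$ (so lies in $e^{-1}\mathbb{Z}$) and $i^* \in e^{-1}\mathbb{Z}$ as well, we must have $i^* \leq i_{L/K} - e^{-1}$. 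On the interval $[i^*, i_{L/K}]$ the slope $\sharp G_{(t)}$ of $\ff_{L/K}$ is everywhere at least $\sharp G_{(i_{L/K})} \geq 2$, the lower bound holding because $G_{(i_{L/K})}$ contains both $1$ and $\sigma_0$. Integrating gives
\[ \ff_{L/K}(i^*) \;\leq\; \ff_{L/K}(i_{L/K}) - 2 e^{-1} \;=\; u_{L/K} - 2 e^{-1}, \]
which is strictly smaller than $u_{L/K} - e^{-1}$, again a contradiction.

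The whole argument is driven by two small rigidity facts: the quantization $i^*, i_{L/K} \in e^{-1}\mathbb{Z}$, which forces a gap of at least $e^{-1}$ whenever $i^* < i_{L/K}$, and the slope of $\ff_{L/K}$ being at least $2$ just below the top break, so that the image drops by at least $2 e^{-1}$. Neither point is deep, and the only real care needed is in confirming that the top break $i_{L/K}$ is actually attained on $G \setminus \{1\}$ (so that $\sharp G_{(i_{L/K})} \geq 2$), but this is immediate from the definition of $i_{L/K}$ as the infimum of the finite-valued function $\ii_{L/K}$.
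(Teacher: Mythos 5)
Your argument is correct and essentially the same as the paper's: both apply Lemma \ref{Herbrand} and then exploit the quantization of valuations in $e^{-1}\mathbb{Z}$ together with the fact that $\ff_{L/K}$ has slope $\sharp G_{(i_{L/K})} \geq 2$ just below the top break. The paper packages this as the explicit computation $\fp_{L/K}(u_{L/K}-e^{-1}) = i_{L/K} - \tfrac{1}{ed}$ (which cannot lie in $e^{-1}\mathbb{Z}$), whereas you argue the contrapositive via a case split on $i^*$, but the content is the same.
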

\begin{proof}
By assumption, we have $v_K(f(\pi_L)) = v_K(f,g) = u_{L/K} - e^{-1}$. 
By Lemma \ref{Herbrand}, we have 
\[ \sup_{\sigma \in G} v_K(\pi_g - \sigma (\pi_L)) = 
\fp_{L/K}(u_{L/K} - e^{-1}) = i_{L/K} - \frac{1}{ed}, \]
where $d:=\sharp G^{(u_{L/K})}$. 
By multiplying $e$ with the above equation, we have
\[ v_L(\pi_g - \sigma_0 \pi_L) = e i_{L/K} - \frac{1}{d}, \]
for some $\sigma_0 \in G$. 
If we suppose $L = M_g$, 
then LHS is an integer. 
However, RHS is never an integer. 
This is a contradiction. 
Hence we have $L \not= M_g$. 
\end{proof}

\section{Main Theorem}\label{MainTheorem}

In this section, we state our main theorem and its consequences. 
Let $L/K$ be a finite totally ramified Galois extension of degree $e$. 

\begin{theorem}\label{MainTheoremB}
The property $\Tem$ for $m=u_{L/K}$ is equivalent to the condition 
$\mathrm{Hom}_{\mathrm{cont}}(G_k,G_{(i_{L/K})})=1$.
\end{theorem}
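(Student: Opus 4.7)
The plan is to execute the three-step program laid out in the introduction. First, I reduce to the abelian case via $\Tem\iff\Pem$ (for $m>1$) together with the reduction property of $\Pem$ (Prop.\ \ref{reduce}): both sides of the equivalence in Theorem \ref{MainTheoremB} descend through abelian subquotients of $G=\Gal(L/K)$, and by Hasse--Arf the break $u_{L/K}$ becomes an integer. The tame case $u_{L/K}=1$ falls outside the regime $m>1$ and requires a separate, elementary Krasner argument: there $G_{(i_{L/K})}=G$ is cyclic of order $e$ coprime to $p$, and the equivalence collapses to the surjectivity of $x\mapsto x^e$ on $k^\times$, which is Kummer theory for cyclic degree-$e$ covers of $\mathrm{Spec}(k)$.

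With $L/K$ abelian and $m=u_{L/K}>1$, I match $\Tem$ with the surjectivity of the graded norm
\[ N_{m-1}:U_L^{\psi(m-1)}/U_L^{\psi(m-1)+1}\longrightarrow U_K^{m-1}/U_K^m. \]
For any $g\in E_K^e$ with $v_K(f,g)\geq u_{L/K}$, Lemma \ref{Herbrand} gives $\sigma_0\in G$ with $v_K(\pi_g-\sigma_0\pi_L)\geq i_{L/K}$; strict inequality already settles the matter via Krasner (Prop.\ \ref{KrasnerLemmaRenewB}(i)), so only the boundary layer $v_K(f,g)=u_{L/K}$ is at stake. On this boundary I parametrize $g$ by the class of $a_0(g)/a_0(f)$ in $U_K^{m-1}/U_K^m$, read off from Lemma \ref{DistanceEisenstein}. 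For $u\in U_L^{\psi(m-1)}$, the minimal polynomial $g'$ of the uniformizer $u\pi_L$ of $L$ lies in the same layer and has constant-term ratio $\pm N_{L/K}(u)\bmod U_K^m$. Thus: if $\Tem$ holds, the constant-term ratio of every $g$ in the layer is realized by some $N_{L/K}(u)$, forcing $N_{m-1}$ surjective; conversely, if $N_{m-1}$ is surjective, then given $g$ I choose $u$ matching its constant-term class and then successively refine $u$ by higher-order perturbations (using that the analogous norm maps at levels $n>m-1$ are surjective, since $u_{L/K}$ is the top break so $G_{\psi(n)}=1$ there) to achieve $v_K(g,g')>u_{L/K}$, whence $M_g\cong M_{g'}=L$ by Prop.\ \ref{KrasnerLemmaRenewB}(i).

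Finally, I compute the cokernel of $N_{m-1}$ using Proposition \ref{exact}. Its kernel $G_{\psi(m-1)}/G_{\psi(m-1)+1}$ (Serre indexing) equals Fontaine's $G_{(i_{L/K})}$ by Remark \ref{Numbering}, i.e.\ the full (last non-trivial) ramification group at the top break. Identifying both $U_L^{\psi(m-1)}/U_L^{\psi(m-1)+1}$ and $U_K^{m-1}/U_K^m$ with $k$ additively (valid for $m>1$), $N_{m-1}$ becomes a separable additive polynomial map $k\to k$ with kernel $G_{(i_{L/K})}$ (cf.\ Rem.\ \ref{separability}). By Artin--Schreier theory, the cokernel of such a map classifies $G_{(i_{L/K})}$-torsors over $\mathrm{Spec}(k)$ and is therefore canonically isomorphic to $\mathrm{Hom}_{\mathrm{cont}}(G_k,G_{(i_{L/K})})$; vanishing of this group is exactly surjectivity of $N_{m-1}$, and therefore exactly $\Tem$ at $m=u_{L/K}$.

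The main obstacle will be the second paragraph: promoting a single-coefficient match guaranteed by surjectivity of $N_{m-1}$ into the full proximity $v_K(g,g')>u_{L/K}$ required to invoke Prop.\ \ref{KrasnerLemmaRenewB}(i). This demands careful ultrametric bookkeeping with the identity $v_K(g,g')=v_K(g(\pi_{g'}))$ and repeated application of Lemma \ref{Herbrand} to convert distances in $E_K^e$ back into valuations of differences of roots, exploiting crucially that beyond the top break $u_{L/K}$ the kernel of the analogous norm maps is trivial, so higher-order uniformizer adjustments can be made freely without disturbing the constant-term match already achieved.
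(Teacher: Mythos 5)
Your overall strategy matches the paper's: reduce to the top break via $\Tem \iff \Pem$ and Lemma \ref{homequivalence}, then characterize $\Tem$ at $m = u_{L/K}$ by surjectivity of the graded norm $N_{m-1}$, and finally compute the cokernel of $N_{m-1}$. The broad outline is sound and would produce a correct proof. Two remarks are worth making about where you and the paper diverge.

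First, the step you flag as ``the main obstacle'' — upgrading the constant-term match to the full proximity $v_K(g,g') > u_{L/K}$ — is in fact a one-shot consequence of Lemma \ref{DistanceEisenstein} plus the integrality of $m$, and no iterative refinement is needed. Once you have $u'$ with $N(u') \equiv u \pmod{U_K^m}$ and set $g'$ to be the minimal polynomial of $u'\pi_L$, the ultrametric inequality gives $v_K(g,g') \geq \min\{v_K(g,f),\, v_K(f,g')\} = m$. By Lemma \ref{DistanceEisenstein}, a coefficient with index $1 \leq i \leq e-1$ can realize the minimum only if $v_K(a_i - b_i) = m - i/e$, which is impossible since $m \in \mathbb{Z}$ and $0 < i/e < 1$; and the index-$0$ term contributes $v_K(a_0(g) - a_0(g')) \geq 1 + m > m$ thanks to the congruence $N(u') \equiv u$. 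Hence $v_K(g,g') > m$ outright. (Your iterative scheme, using surjectivity of $N_n$ for $n > m-1$, does work — those graded norms have degree $[G_{\psi(n)} : G_{\psi(n)+1}] = 1$ above the top break, so they are bijective — but it is overkill. Note also that this surjectivity is \emph{not} a consequence of ``$G_{\psi(n)} = 1$'' per se, which only gives injectivity; it is the degree computation that saves you.)

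Second, for the cokernel you propose a direct Lang/Artin--Schreier torsor argument, splitting into the additive case $m > 1$ and the multiplicative (tame) case $m = 1$. The paper instead treats both uniformly (Lemma \ref{CokerNorm}): pass to the completions $K_0$, $L_0$ of the maximal unramified extensions, apply Proposition \ref{exact} there to get a short exact sequence of $G_k$-modules ending in $0$ (Remark \ref{separability}), and take $H^1_{\mathrm{cont}}(G_k, -)$; additive Hilbert 90 kills the middle terms and leaves $\mathrm{Coker}(N_{m-1}) \cong \mathrm{Hom}_{\mathrm{cont}}(G_k, G_{(i_{L/K})})$ since $G_k$ acts trivially on $G_{\psi(m-1)}/G_{\psi(m-1)+1}$. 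This avoids having to invoke separately ``the cokernel classifies torsors'' in the two regimes, and it also makes the tame case $m = 1$ fall under the same Proposition \ref{ProofAbelianCase} rather than requiring the separate Krasner argument you envision (the proposition only needs $u_{L/K}$ to be an integer, which $u_{L/K} = 1$ is).
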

\begin{corollary}\label{quasi-finite}
Assume $k$ is a quasi-finite field. 
Then $\Tem$ is not true for $m=u_{L/K}$. 
\end{corollary}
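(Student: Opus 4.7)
The plan is to invoke Theorem \ref{MainTheoremB}, which reduces the assertion to showing that $\mathrm{Hom}_{\mathrm{cont}}(G_k, G_{(i_{L/K})})$ is nontrivial; it is then enough to exhibit a single nonzero continuous homomorphism.

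First I would verify that $G_{(i_{L/K})}$ is itself nontrivial. Since our standing convention excludes $L = K$, the group $G$ is nontrivial, and the infimum $i_{L/K} = \inf\{i \in \mathbb{R} : G_{(i)} = 1\}$ is in fact attained as the maximum of the finite set of values $\ii_{L/K}(\sigma)$ taken over $\sigma \in G \setminus \{1\}$. Any $\sigma$ realizing this maximum lies in $G_{(i_{L/K})}$, so choosing such a $\sigma$ and letting $\ell$ be a prime divisor of its order produces an embedding $\mathbb{Z}/\ell\mathbb{Z} \hookrightarrow G_{(i_{L/K})}$.

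Next, I would use that $k$ is quasi-finite, which by definition means $G_k \cong \widehat{\mathbb{Z}}$. This procyclic group surjects continuously onto every finite cyclic group, in particular onto $\mathbb{Z}/\ell\mathbb{Z}$. Composing the surjection $G_k \twoheadrightarrow \mathbb{Z}/\ell\mathbb{Z}$ with the embedding above yields a nontrivial continuous homomorphism $G_k \to G_{(i_{L/K})}$, as required.

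I do not anticipate a serious obstacle here: the substance lies entirely in Theorem \ref{MainTheoremB}, after which the corollary reduces to combining the procyclicity of $G_k$ with the basic observation that the deepest ramification group is nontrivial.
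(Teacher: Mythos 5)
Your proposal is correct and follows essentially the same route as the paper: invoke Theorem~\ref{MainTheoremB} and then use that $G_k\cong\widehat{\mathbb{Z}}$ for quasi-finite $k$ to see that $\mathrm{Hom}_{\mathrm{cont}}(G_k,G_{(i_{L/K})})$ is nontrivial. The only difference is that the paper computes this Hom group outright as $\mathrm{Hom}_{\mathrm{cont}}(\widehat{\mathbb{Z}},G_{(i_{L/K})})\cong G_{(i_{L/K})}\neq 1$, whereas you exhibit a single nontrivial homomorphism by factoring through $\mathbb{Z}/\ell\mathbb{Z}$ for a prime $\ell$ dividing $\sharp G_{(i_{L/K})}$ -- a slightly more hands-on version of the same observation, which incidentally avoids relying on $G_{(i_{L/K})}$ being abelian.
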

\begin{proof}
By assumption, we have
\[ \mathrm{Hom}_{\mathrm{cont}}(G_k,G_{(i_{L/K})})=\mathrm{Hom}_{\mathrm{cont}}(\hat{\mathbb{Z}},G_{(i_{L/K})}) 
\cong G_{(i_{L/K})} \not= 1. \]
We obtain the desired result by Theorem \ref{MainTheoremB}. 
\end{proof}
Theorem A is a consequence of the following: 
\begin{corollary}\label{CorMainTheoremB}
If $L/K$ is tamely ramified, then $\Tem$ for $m=u_{L/K}$ is equivalent to the 
condition $k^{\times}/(k^{\times})^e = 1$. 
If $L/K$ wildly ramified, then $\Tem$ for $m=u_{L/K}$ is equivalent to the 
condition $k/\wp(k)=0$, where $\wp(X):=X^p - X$. 
\end{corollary}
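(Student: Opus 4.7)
The plan is to deduce the corollary directly from Theorem~\ref{MainTheoremB} by rewriting the Hom-condition in each case.

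First I would observe that, since $u_{L/K} = \ff_{L/K}(i_{L/K})$ and $G^{(u)} = G_{(\fp_{L/K}(u))}$, we have $G_{(i_{L/K})} = G^{(u_{L/K})}$, the smallest nontrivial group in either ramification filtration. Theorem~\ref{MainTheoremB} then says that $\Tem$ at $m=u_{L/K}$ holds iff $\mathrm{Hom}_{\mathrm{cont}}(G_k, G^{(u_{L/K})}) = 1$, so the task reduces to identifying this group structurally in the tame and wild cases.

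In the tame case, the totally tamely ramified Galois extension $L/K$ of degree $e$ has cyclic Galois group $G$, and a uniformizer $\pi_L$ satisfies $\sigma(\pi_L) = \zeta_\sigma \pi_L$ for roots of unity $\zeta_\sigma$ of order prime to $p$; this gives $\ii_{L/K}(\sigma) = 1/e$ for all $\sigma \neq 1$, so $G_{(i_{L/K})} = G$ is cyclic of order $e$ with $(e,p)=1$, and the map $\sigma \mapsto \zeta_\sigma$ forces $\mu_e \subset k$. Kummer theory then gives
\[ \mathrm{Hom}_{\mathrm{cont}}(G_k, G) \cong \mathrm{Hom}_{\mathrm{cont}}(G_k, \mu_e) \cong k^{\times}/(k^{\times})^e, \]
so the Hom vanishes iff $k^{\times}/(k^{\times})^e = 1$.

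In the wild case, the excerpt (following \cite{Serre79}) tells us that $G^{(u_{L/K})}$ is abelian and killed by $p$, so it is a nontrivial elementary abelian $p$-group, isomorphic to $(\mathbb{Z}/p\mathbb{Z})^r$ for some $r \geq 1$. Hence
\[ \mathrm{Hom}_{\mathrm{cont}}(G_k, G^{(u_{L/K})}) = 1 \iff \mathrm{Hom}_{\mathrm{cont}}(G_k, \mathbb{Z}/p\mathbb{Z}) = 0, \]
and since $k$ has characteristic $p$, Artin--Schreier theory identifies the right-hand group with $k/\wp(k)$. Thus the condition becomes $k/\wp(k) = 0$.

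The only nonroutine ingredient is the structural identification of $G_{(i_{L/K})}$ in each case (cyclic of order $e$ with $\mu_e \subset k$ in the tame case, elementary abelian $p$-group in the wild case); once that is done, the translation via Kummer/Artin--Schreier is immediate. I expect no real obstacle beyond being careful that $\mu_e \subset k$ in the tame case so that Kummer theory applies on the nose.
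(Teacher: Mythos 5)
Your proposal is correct and follows essentially the same route as the paper: identify $G_{(i_{L/K})}$ as $\mu_e$ (with $\mu_e\subset k$) in the tame case and as an elementary abelian $p$-group in the wild case, then translate $\mathrm{Hom}_{\mathrm{cont}}(G_k,-)=1$ via Kummer and Artin--Schreier theory respectively. The extra detail you supply (the computation $\ii_{L/K}(\sigma)=1/e$ and the explicit remark that $G_{(i_{L/K})}=G^{(u_{L/K})}$) is a harmless elaboration of what the paper leaves implicit.
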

\begin{proof}
Assume $L/K$ is tamely ramified. 
Then the group $G=G_{(i_{L/K})}$ 
is isomorphic to a finite cyclic group $\mu_e$ of order $e$. 
Note that $k$ contains the $e$th roots of unity. 
Hence we have 
$\mathrm{Hom}_{\mathrm{cont}}(G_k,\mu_e)$ $\cong$ $k^{\times}/(k^{\times})^e$ 
by Kummer theory. 
It follows the desired result by Theorem \ref{MainTheoremB}. 
Assume $L/K$ is wildly ramified. 
Then we have $G_{(i_{L/K})} \cong \oplus\ \mathbb{Z}/p \mathbb{Z}$. 
Therefore, we obtain 
$\mathrm{Hom}_{\mathrm{cont}}(G_k,G_{(i_{L/K})})$ $\cong$ $\oplus\ k/\wp(k)$ by 
Artin-Schreier theory. 
By Theorem \ref{MainTheoremB}, the proof completes. 
\end{proof}

\section{Proof of the main theorem}\label{ProofMainTheorem}

\subsection{Reduction to the abelian case}\label{Reduction}

In this subsection, we reduce the proof of Theorem \ref{MainTheoremB} 
to the case where $L/K$ has only one ramification break so that 
$L/K$ is abelian. 
To complete this, 
we consider the property $\Pem$ below. 
Let $L/K$ be a finite Galois totally ramified extension of degree $e$. 
\begin{quote}
\begin{itemize}
\item[$\Pem$] \emph{For any finite totally ramified extension $M/K$ of degree $e$, 
if there exists an $\cO_K$-algebra homomorphism 
$\cO_L \to \cO_M/\fa_{M/K}^m$, 
then there exists a $K$-isomorphism $L \cong M$}.
\end{itemize}
\end{quote}
\begin{proposition}\label{Tem=Pem}
The property $\Tem$ is equivalent to $\Pem$ for any real number $m>1$. 
\end{proposition}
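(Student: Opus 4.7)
The plan is to rewrite both $\Tem$ and $\Pem$ in terms of the same auxiliary object. Using $\cO_L = \cO_K[\pi_L] \cong \cO_K[X]/(f(X))$, an $\cO_K$-algebra homomorphism $\cO_L \to \cO_M/\fa_{M/K}^m$ is the same datum as a class $\bar{\alpha} \in \cO_M/\fa_{M/K}^m$ satisfying $f(\bar{\alpha}) = 0$, equivalently a lift $\alpha \in \cO_M$ with $v_K(f(\alpha)) \geq m$. Both $\Pem$ and $\Tem$ then become statements about such $\alpha$'s, the difference being that $\Tem$ only sees those $\alpha$'s which are actual uniformizers $\pi_g$ of some $g \in E_K^e$.

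The direction $\Pem \Rightarrow \Tem$ is immediate and does not require $m > 1$: given $g \in E_K^e$ with $v_K(f, g) \geq m$, I set $M := M_g$ and $\alpha := \pi_g$; since $v_K(f(\pi_g)) = v_K(f, g) \geq m$, one obtains the $\cO_K$-algebra homomorphism $\cO_L \to \cO_{M_g}/\fa_{M_g/K}^m$ sending $\pi_L$ to $\pi_g$, and $\Pem$ then delivers $L \cong M_g$, which is the conclusion of $\Tem$.

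For the converse $\Tem \Rightarrow \Pem$, I would start from $\phi \colon \cO_L \to \cO_M/\fa_{M/K}^m$ with $M/K$ totally ramified of degree $e$, lift $\phi(\pi_L)$ to $\alpha \in \cO_M$, and secure $v_K(f(\alpha)) \geq m > 1$. The crucial step is to show that $\alpha$ is necessarily a uniformizer of $M$. Writing $f(X) = X^e + a_{e-1}X^{e-1} + \cdots + a_0$ with $v_K(a_0) = 1$ and $v_K(a_i) \geq 1$ for $0 \leq i < e$, a Newton polygon argument shows: if $v_K(\alpha) = 0$ then $f(\alpha)$ is a unit, and if $v_K(\alpha) > 1/e$ then $a_0$ is the unique minimum term of $f(\alpha)$, forcing $v_K(f(\alpha)) = 1$; both alternatives contradict $v_K(f(\alpha)) \geq m > 1$. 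Hence $v_K(\alpha) = 1/e$, so $\alpha$ is a uniformizer of $M$ with minimal polynomial some $g \in E_K^e$ satisfying $v_K(f, g) = v_K(f(\alpha)) \geq m$; applying $\Tem$ yields $L \cong M_g = K(\alpha) = M$, where the last equality follows from the degree bound $[K(\alpha):K] \geq e = [M:K]$.

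The main obstacle, and the reason for the hypothesis $m > 1$, is precisely this uniformizer step: for $m \leq 1$ the case $v_K(\alpha) > 1/e$ is not excluded, so a homomorphism $\cO_L \to \cO_M/\fa_{M/K}^m$ need not be induced by an Eisenstein polynomial over $K$ whose root lies in $M$, and the reduction to $\Tem$ collapses.
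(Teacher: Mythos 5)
Your overall structure matches the paper: the direction $\Pem \Rightarrow \Tem$ is the same reduction (the data $\pi_L \mapsto \pi_g$ defines the needed $\cO_K$-algebra map because $v_K(f(\pi_g)) = v_K(f,g) \geq m$), and the direction $\Tem \Rightarrow \Pem$ hinges in both treatments on showing that any lift $\beta$ of $\eta(\pi_L)$ is forced to be a uniformizer of $M$. For that crucial step, however, you use a Newton-polygon argument on $f$, whereas the paper isolates the claim as Lemma~\ref{hominjective} and proves it by a short algebraic identity: $u := \pi_L^e/\pi_K$ is a unit of $\cO_L$, so $\eta(u)$ is a unit of $\cO_M/\fa_{M/K}^m$, and then the congruence $\beta^e \equiv \eta(u)\pi_K$ modulo $\fa_{M/K}^m$ together with $m>1$ gives $v_K(\beta^e)=1$ at once. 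The paper's route is shorter and avoids all case analysis; yours has the pedagogical advantage of making visible, term by term, why $m>1$ is exactly the threshold.

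There is one small but genuine gap in your case analysis. You rule out $v_K(\alpha)=0$ and $v_K(\alpha)>1/e$ and then conclude $v_K(\alpha)=1/e$, but the range $0 < v_K(\alpha) < 1/e$ is never addressed. In that range the leading term $\alpha^e$ is the strict minimum of the Newton polygon, since $ev_K(\alpha) < 1 \leq v_K(a_i\alpha^i)$ for all $0 \leq i \leq e-1$; hence $v_K(f(\alpha)) = ev_K(\alpha) < 1 < m$, again a contradiction. So the conclusion survives, but the case must be stated for the deduction ``hence $v_K(\alpha)=1/e$'' to be valid. With that case added, the proof is complete and correct.
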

\begin{proof}
Let $L/K$ be a finite Galois totally ramified extension of degree $e$. 
Take a uniformizer $\pi_L$ of $L$ and $f \in E_K^e$ its minimal polynomial over $K$. 
Assume that $\Tem$ is true for $f$ and $m>1$. 
Then we show that $\Pem$ is also true for $L/K$ and $m$. 
Suppose there exists an $\cO_K$-algebra homomorphism 
$\eta:\cO_L \to \cO_M/\fa_{M/K}^m$ for a totally 
ramified extension $M$ of $K$ of degree $e$. 
By Lemma \ref{hominjective} below, 
we have $v_K(\beta)=1/e$, where $\beta$ is a lift of $\eta(\pi_L)$ in $\cO_M$. 
Hence $\beta$ is a uniformizer of $M$. 
Take the minimal polynomial $g \in E_K^e$ of $\beta$ over $K$. 
By the well-definedness of $\eta$, 
we have $v_K(f,g) = v_K(f(\beta)) \geq m$. 
Since the property $\Tem$ is true for $f$ and $m$, 
we have $L = M_g \cong M$. 

Conversely, we assume that $\Pem$ is true for $L/K$ and $m>1$. 
Then we show that $\Tem$ is also true for $f$ and $m$. 
Suppose $v_K(f,g) \geq m$ for an element $g \in E_K^e$. 
Note that $v_K(f(\pi_g)) = v_K(f,g) \geq m$, where 
$\pi_g$ is a root of $g$. 
Then the map $\cO_L \to \cO_{M_g}/\fa_{M_g/K}^m$ defined by $\pi_L \mapsto \pi_g$ 
is an $\cO_K$-algebra homomorphism. 
Since $\Pem$ is true for $L/K$ and $m$, 
we have $L \cong M_g$. 
\end{proof}
\begin{lemma}\label{hominjective}
Let $L/K$ be a finite totally ramified extension of degree $e$, 
$\pi_L$ a uniformizer of $L$ 
and $m>1$ a real number. 
Assume there exists an $\cO_K$-algebra homomorphism $\eta:\cO_L \to \cO_M/\fa_{M/K}^m$ 
for an algebraic extension $M$ of $K$. 
Then we have $v_K(\beta)=1/e$, where $\beta$ is any lift of $\eta(\pi_L)$. 
\end{lemma}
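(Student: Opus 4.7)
The plan is to exploit that $\eta$ being an $\cO_K$-algebra homomorphism forces $f(\beta) \equiv 0 \pmod{\fa_{M/K}^m}$, where $f \in E_K^e$ is the minimal polynomial of $\pi_L$ over $K$, and then to extract $v_K(\beta) = 1/e$ from this vanishing together with the Eisenstein shape of $f$ via a Newton-polygon-type valuation count.

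First I would verify the reduction: since $f \in \cO_K[X]$ and $\eta$ is $\cO_K$-linear, we have $f(\eta(\pi_L)) = \eta(f(\pi_L)) = 0$ in $\cO_M/\fa_{M/K}^m$, so any lift $\beta$ of $\eta(\pi_L)$ satisfies $v_K(f(\beta)) \geq m$. Note that the conclusion is insensitive to the choice of lift: altering $\beta$ by an element of $\fa_{M/K}^m$ (of $v_K$-valuation $\geq m > 1 > 1/e$) does not change its valuation once we have shown it equals $1/e$.

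Next, I would write $f(X) = X^e + \sum_{i=0}^{e-1} a_i X^i$ with $v_K(a_0) = 1$ and $v_K(a_i) \geq 1$ for $1 \leq i \leq e-1$, set $r = v_K(\beta) \in \mathbb{R}_{\geq 0} \cup \{+\infty\}$, and compare the valuations of the individual terms, namely $v_K(\beta^e) = er$, $v_K(a_0) = 1$, and $v_K(a_i \beta^i) \geq 1 + ir$. A case analysis on $r$ versus $1/e$ then concludes the argument. If $r > 1/e$ (including the case $\beta = 0$), then the unique strict minimum among the summands of $f(\beta)$ is $v_K(a_0) = 1$, hence $v_K(f(\beta)) = 1$. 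If $r < 1/e$, then the unique strict minimum is $v_K(\beta^e) = er < 1$, hence $v_K(f(\beta)) = er$. In both cases $v_K(f(\beta)) \leq 1 < m$, contradicting $v_K(f(\beta)) \geq m$. Therefore $r = 1/e$.

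There is no serious obstacle; this is essentially a valuation-count on the Newton polygon of the Eisenstein polynomial $f$. The only delicate point worth flagging is that the strict inequality $m > 1$ is indispensable for ruling out $r > 1/e$: at $m = 1$, the equality $v_K(f(\beta)) = 1$ is compatible with the hypothesis and the conclusion would fail, which matches exactly the $m > 1$ hypothesis in Proposition \ref{Tem=Pem}.
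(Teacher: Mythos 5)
Your proof is correct, but the route is different from the paper's. The paper avoids the minimal polynomial entirely: it writes $\pi_L^e = u\,\pi_K$ with $u \in \cO_L^\times$, applies $\eta$ to obtain $\beta^e \equiv \eta(u)\,\pi_K$ in $\cO_M/\fa_{M/K}^m$, notes that $\eta(u)$ lifts to a unit of $\cO_M$ so the right side has $v_K$-valuation exactly $1$, and then concludes $v_K(\beta^e)=1$ by the ultrametric inequality since $m>1$. You instead push the Eisenstein polynomial $f$ of $\pi_L$ through $\eta$ to get $v_K(f(\beta))\geq m$ and run a Newton-polygon trichotomy on $r=v_K(\beta)$, showing $r\neq 1/e$ forces $v_K(f(\beta))\leq 1<m$. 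Both are sound: the paper's argument is shorter and isolates the single relation among the coefficients of $f$ that actually matters (namely $v_K(a_0)=1$, packaged as $\pi_L^e/\pi_K$ being a unit), whereas your Newton-polygon computation is more systematic and makes the role of the hypothesis $m>1$ explicit via the bound $v_K(f(\beta))\leq 1$ in both failure cases. Your remark that the conclusion is insensitive to the choice of lift is harmless but not needed, since the bound $v_K(f(\beta))\geq m$ already holds uniformly for every lift $\beta$.
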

\begin{proof}
Assume there exists an $\cO_K$-algebra homomorphism $\eta:\cO_L \to \cO_M/\fa_{M/K}^m$. 
Put $u=\pi_L^e/\pi_K$. 
Then $u$ is a unit, so that $\eta(u)$ is also. 
Since $\beta^e \equiv \eta(u) \pi_K$ in $\cO_M/\fa_{M/K}^m$ and $m > 1$, 
we have $v_K(\beta^e)=v_K(\pi_K)=1$. 
Hence $v_K(\beta)=1/e$. 
\end{proof}
\begin{proposition}\label{reduce}
Let $L$ be a finite Galois totally 
ramified extension of $K$ of degree $e$ and 
$K'$ the fixed field of $L$ by $H:=G^{(u_{L/K})}$. 
Take a uniformizer $\pi_L$ of $L$. 
Let $f$ $\mathrm{(}$resp.\ $f'\mathrm{)}$ 
be the minimal polynomial of $\pi_L$ over $K$ $\mathrm{(}$resp.\ $K'\mathrm{)}$. 
Then the property $\Tem$ is true for $f$ and $m=u_{L/K}$ 
if and only if $(\mathrm{T}_m^{e'})$ is true for $f'$ and $m=u_{L/K'}$, 
where $e'$ is the ramification index of $L/K'$. 
\end{proposition}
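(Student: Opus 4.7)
The plan is to argue both implications by transferring the distance condition across the tower $L/K'/K$, with Lemma \ref{Herbrand} doing most of the work. If $H = G$ (in particular if $L/K$ is tamely ramified), then $K' = K$ and the equivalence is tautological; henceforth I assume $H \subsetneq G$, so that $K' \ne K$. In this case the upper filtration of $G$ has at least two distinct breaks, and the second-largest of them is $u_{K'/K}$, giving the strict inequality $u_{K'/K} < u_{L/K}$ that will be crucial later. Since $L/K'$ has a single ramification break (because $H_{(t)} = H$ for $t \le e_1 i_{L/K}$ and is trivial above), one has $i_{L/K'} = e_1 i_{L/K}$, so $u_{L/K'} = e \cdot i_{L/K}$, while $u_{L/K} = \ff_{L/K}(i_{L/K})$.

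For the forward direction, given $g' \in E_{K'}^{e'}$ with $v_{K'}(f',g') \ge u_{L/K'}$, I let $\pi' = \pi_{g'}$ and take $g$ to be the minimal polynomial of $\pi'$ over $K$. The degree comparison $[K'(\pi'):K] = e$ together with $v_K(\pi') = 1/e$ forces $K(\pi') = K'(\pi') = M_{g'}$, so $g \in E_K^e$. Applying Lemma \ref{Herbrand} to $L/K'$ gives some $\tau_0 \in H$ with $v_K(\tau_0(\pi_L) - \pi') \ge i_{L/K}$, and Lemma \ref{Herbrand} in the $L/K$ direction then yields $v_K(f(\pi')) \ge \ff_{L/K}(i_{L/K}) = u_{L/K}$. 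Invoking $\Tem$ for $f$ gives a $K$-isomorphism $L \cong M_{g'}$, which I promote to a $K'$-isomorphism by pre-composing with a lift to $G$ of the element of $\Gal(K'/K) = G/H$ that records how the isomorphism permutes the unique copy of $K'$ in $M_{g'}$.

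For the reverse direction, given $g \in E_K^e$ with $v_K(f,g) \ge u_{L/K}$, the goal is to apply $(\mathrm{T}_m^{e'})$ for $f'$ to the minimal polynomial $g'$ of $\pi_g$ over $K'$. The crux is that this first requires $K' \subset M_g := K(\pi_g)$, which Krasner's lemma for $L/K$ does not yield at the boundary $m = u_{L/K}$. I propose to overcome this by choosing a uniformizer $\beta$ of $K'$ with minimal polynomial $h \in E_K^{e_1}$, writing $\beta = P(\pi_L)$ with $P \in \cO_K[X]$ of degree $< e$, and noting that $\pi_L$ is a root of $h \circ P$, so $h \circ P = f \cdot Q$ in $\cO_K[X]$ by Gauss's lemma. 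The approximation $\tilde\beta := P(\pi_g) \in \cO_{M_g}$ then satisfies $v_K(h(\tilde\beta)) = v_K(f(\pi_g)) + v_K(Q(\pi_g)) \ge u_{L/K}$, and Lemma \ref{Herbrand} applied to $K'/K$ converts this to $\sup_\sigma v_K(\sigma\beta - \tilde\beta) \ge \fp_{K'/K}(u_{L/K}) > i_{K'/K}$; Krasner's lemma for $K'/K$ then gives $K' \subset K(\tilde\beta) \subset M_g$. Once this is in hand, $g' \in E_{K'}^{e'}$, and an ultrametric estimate on $f'(\pi_g) = \prod_{\tau \in H}(\pi_g - \tau(\pi_L))$ — each factor of $v_K$-valuation $\ge i_{L/K}$ after replacing $\pi_L$ by a suitable conjugate — delivers $v_{K'}(f',g') \ge e \cdot i_{L/K} = u_{L/K'}$, and $(\mathrm{T}_m^{e'})$ for $f'$ finishes the argument.

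The main obstacle is producing the embedding $K' \subset M_g$ in the reverse direction: the argument hinges on the strict inequality $u_{K'/K} < u_{L/K}$, which is forced precisely by the defining property of $H = G^{(u_{L/K})}$ as the top piece of the upper filtration. This is exactly why the statement of the proposition singles out this particular intermediate field.
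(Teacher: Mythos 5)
Your argument takes a genuinely different route from the paper's proof, which simply translates both $\Tem$ statements into the uniformizer-independent property $\Pem$ via Proposition \ref{Tem=Pem} and then invokes Lemma \ref{homequivalence} (Suzuki--Yoshida) to move across the tower $L/K'/K$. Your direct Krasner-style strategy is interesting, and several pieces are correct: the forward direction (including the promotion to a $K'$-isomorphism via normality of $K'/K$), the Gauss-lemma device producing $K' \subset M_g$ at the boundary $m=u_{L/K}$, and the computation $u_{L/K'} = e\, i_{L/K}$.

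However, there is a genuine gap in the final step of the reverse direction. You write that in $f'(\pi_g) = \prod_{\tau \in H}(\pi_g - \tau(\pi_L))$ "each factor has $v_K$-valuation $\ge i_{L/K}$ after replacing $\pi_L$ by a suitable conjugate." Herbrand's lemma only produces some $\sigma_0 \in G$ with $v_K(\pi_g - \sigma_0\pi_L) \ge i_{L/K}$, and the coset $\sigma_0 H$, not $\sigma_0$ itself, is what is determined; nothing forces $\sigma_0 \in H$. If $\sigma_0 \notin H$, then for every $\tau \in H$ one has $v_K(\pi_g - \tau\pi_L) = \ii_{L/K}(\tau^{-1}\sigma_0) < i_{L/K}$ because $\tau^{-1}\sigma_0 \notin H = G_{(i_{L/K})}$, so actually $v_{K'}(f',g') < u_{L/K'}$ and the hypothesis of $(\mathrm{T}_m^{e'})$ for the \emph{stated} $f'$ is not met. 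Replacing $\pi_L$ by $\sigma_0\pi_L$ does not repair this: $\prod_{\tau \in H}(X - \tau\sigma_0\pi_L)$ is the minimal polynomial of $\sigma_0\pi_L$ over $K'$, namely $\sigma_0(f')$, and this is a different polynomial from $f'$ whenever $\sigma_0 \notin H$. To close the gap you should either (a) replace $\pi_g$ — not $\pi_L$ — by $\rho(\pi_g)$ where $\rho$ is any extension to $\bar{K}$ of $\sigma_0^{-1}$, which leaves $g$ and the $K$-isomorphism class of $M_g$ unchanged while normalizing $\sigma_0$ to lie in $H$; or (b) observe via Proposition \ref{Tem=Pem} that $(\mathrm{T}_m^{e'})$ for $f'$ and for $\sigma_0(f')$ are equivalent since both express $(\mathrm{P}_m^{e'})$ for $L/K'$. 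Note that option (b) is essentially the paper's reduction, so (a) is the fix that preserves the independent character of your argument.
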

\begin{proof}
If $L/K$ is tamely ramified, 
then we have $K=K'$. 
Hence there is nothing to prove. 
Thus we may assume $L/K$ is wildly ramified, so that $m=u_{L/K} > 1$. 
By definition, $L/K'$ is also wildly ramified, 
so that $m=u_{L/K'}>1$. 
By Proposition \ref{Tem=Pem}, 
$\Tem$ for $f$ and $m=u_{L/K}$ is equivalent to $\Pem$ for $L/K$ and $m=u_{L/K}$. 
Similarly, $(\mathrm{T}_m^{e'})$ for $f'$ and $m=u_{L/K'}$ is equivalent 
to $(\mathrm{P}_m^{e'})$ for $L/K'$ and $m=u_{L/K'}$. 
Thus it is enough to 
prove that $\Pem$ for $L/K$ and $m=u_{L/K}$ is equivalent 
to $(\mathrm{P}_m^{e'})$ for $L/K'$ and $m=u_{L/K'}$. 
This is a direct consequence of the following lemma: 
\begin{lemma}[\cite{Suzuki}, Lem.\ 2.2]\label{homequivalence}
Let $L$ and $K'$ be as in Proposition \ref{reduce}. 
Let $M$ be an algebraic extension of $K$. 
The following conditions are equivalent: 

\noindent
$\mathrm{(i)}$ There exists an $\cO_K$-algebra homomorphism $\cO_L \to \cO_M/\fa_{M/K}^{u_{L/K}}$. 

\noindent
$\mathrm{(ii)}$ The field $K'$ is contained in $M$, and 
there exists an $\cO_{K'}$-algebra homomorphism $\cO_L \to \cO_M/\fa_{M/K'}^{u_{L/K'}}$. 
\end{lemma}
\end{proof}

\subsection{The proof of the abelian case}\label{ProofAbelCase}

In this subsection, 
we complete the proof of Theorem \ref{MainTheoremB}. 
It suffices to show the abelian case by Proposition \ref{reduce}. 
Then the break $u_{L/K}$ is an integer by the Hasse-Arf theorem 
(\cite{Serre79}, Chap.\ V, Sect.\ 7, Thm.\ 1). 
Therefore, it suffices to prove the integer break case: 
\begin{proposition}\label{ProofAbelianCase}
Assume $u_{L/K}$ is an integer. 
Then Theorem \ref{MainTheoremB} is true.  
\end{proposition}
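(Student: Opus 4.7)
\emph{Plan.}
The strategy is to translate $\Tem$ at $m=u_{L/K}$ into the surjectivity of the norm map $N_{m-1}$ and then read off the vanishing of its cokernel from Proposition~\ref{exact}.

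First, by Proposition~\ref{KrasnerLemmaRenewB}(i), any $g\in E_K^e$ with $v_K(f,g)>u_{L/K}$ satisfies $M_g\cong L$; consequently $\Tem$ at $m=u_{L/K}$ depends only on the finite quotient $B_m/B_{m+1/e}$, where $B_r:=\{g\in E_K^e:v_K(f,g)\geq r\}$. By Lemma~\ref{DistanceEisenstein}, $B_m/B_{m+1/e}$ is a torsor under $\mathfrak{m}_K^m/\mathfrak{m}_K^{m+1}\cong k$ via perturbation of the constant term (in the tame case $m=1$, one class corresponds to a non-Eisenstein polynomial and must be excluded, so the torsor is under $k^\times$ instead).

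Next, I would introduce the map
\[
\mu:U_L^{\psi(m-1)}/U_L^{\psi(m-1)+1}\to B_m/B_{m+1/e},\qquad [u]\mapsto[f_u],
\]
where $f_u$ is the minimal polynomial of $u\pi_L$ over $K$. Since $m-1$ equals the top upper-numbering break of $L/K$ in Serre's convention, a direct ramification-theoretic calculation of $v_K(f_u,f)=\sum_{\sigma\in G}v_K(u\pi_L-\sigma\pi_L)$, splitting the sum according to whether $\sigma$ lies in $H:=G^{(u_{L/K})}$ or not, shows that $v_L(u-1)\geq\psi(m-1)$ forces $v_K(f_u,f)\geq m$ and that $\mu$ is well defined on the quotient. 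Since every $g\in B_m$ with $M_g\cong L$ has a root of the form $u\pi_L$ for some $u\in\cO_L^\times$, the image of $\mu$ coincides with $\{[g]:M_g\cong L\}$; hence $\Tem$ at $m=u_{L/K}$ is equivalent to $\mu$ being surjective.

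The heart of the proof is then the identification of $\mu$ with $N_{m-1}$. The constant term of $f_u=\prod_\sigma(X-\sigma(u\pi_L))$ equals $(-1)^e N_{L/K}(u)\,N_{L/K}(\pi_L)$, so the shift of the constant term from $f$ to $f_u$ equals $(-1)^e N_{L/K}(\pi_L)(N_{L/K}(u)-1)$. Since $N_{L/K}(\pi_L)$ is a uniformiser of $K$, multiplication by it induces an isomorphism $U_K^{m-1}/U_K^m\xrightarrow{\sim} B_m/B_{m+1/e}$ intertwining $\mu$ with $N_{m-1}$, so $\Tem$ at $m=u_{L/K}$ holds iff $N_{m-1}$ is surjective. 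Finally, combining Proposition~\ref{exact} with the standard cokernel computation of an $\mathbb{F}_p$-linear additive polynomial (via Kummer theory when $L/K$ is tame and Artin--Schreier duality when $L/K$ is wild) yields $\mathrm{Coker}(N_{m-1})\cong\mathrm{Hom}_{\mathrm{cont}}(G_k,G_{\psi(m-1)}/G_{\psi(m-1)+1})=\mathrm{Hom}_{\mathrm{cont}}(G_k,G_{(i_{L/K})})$, whose vanishing is precisely the condition in Theorem~\ref{MainTheoremB}. The main obstacle is the threshold ramification calculation together with checking that all coefficients of $f_u-f$ (not just the constant one) have the correct valuations so that $f_u$ genuinely lies in $B_m$.
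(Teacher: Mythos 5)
Your proposal is correct and takes essentially the same route as the paper. You reduce $\Tem$ at $m=u_{L/K}$ to the surjectivity of $N_{m-1}$ and then compute the cokernel via Proposition~\ref{exact}; the paper does exactly this, reducing to polynomials $g_u$ with the same non-constant coefficients as $f$, constructing $u' = \pi_{M_u}/\sigma_0(\pi_L) \in U_L^{\psi(m-1)}$ with $N(u')=u$, and conversely using Lemmas~\ref{Herbrand} and \ref{CalculationUnit} to show that $u'\pi_L$ has minimal polynomial in the right class. Your presentation in terms of the torsor $B_m/B_{m+1/e}$ and the map $\mu$ is a cleaner packaging of the same computation, and the ``obstacle'' you flag at the end (that $v_K(f_{u'},f)\geq m$ and not just the constant term shift has the right size) is precisely what Lemmas~\ref{Herbrand} and \ref{CalculationUnit} handle. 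The one place the paper differs slightly is the cokernel step: rather than a Kummer/Artin--Schreier case split applied to the additive polynomial $N_{m-1}$, Lemma~\ref{CokerNorm} passes uniformly to the completed maximal unramified extension, where the sequence of Proposition~\ref{exact} becomes short exact (by separability of $N_n$, Remark~\ref{separability}), and then reads off $\mathrm{Coker}(N_{m-1})\cong H^1_{\mathrm{cont}}(G_k, G_{\psi(m-1)}/G_{\psi(m-1)+1})$ from the long exact sequence, using $H^1_{\mathrm{cont}}(G_k,\bar k)=0$; both routes give the same answer, but the descent argument avoids having to identify the polynomial $N_{m-1}$ explicitly.
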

\begin{proof}
Put $m=u_{L/K}$. 
Let $L/K$ be a finite Galois totally 
ramified extension of degree $e$ 
such that $u_{L/K}$ is an integer. 
Take a uniformizer $\pi_L$ of $L$ and its minimal polynomial $f$ over $K$. 
Let $g \in E_K^e$. Put $M_g=K(\pi_g)$, where $\pi_g$ is a root of $g$. 
We write $f=X^e + a_{e-1} + \cdots + a_0$ and 
$g=X^e + b_{e-1} + \cdots + b_0$. 
We want to show that if $v_K(f,g) = m$, then the equality $L=M_g$ 
is equivalent to the condition $\mathrm{Hom}_{\mathrm{cont}}(G_k,G_{(i_{L/K})})=1$. 

First, we prove that it suffices to consider $g$ of the following form 
by replacing $f$ with suitable one: 
\[ g= g_u := X^e + a_{e-1}X^{e-1} + \cdots + a_1 X + u a_0, \]
where $u$ is an element of $U_K^{m-1} \setminus U_K^m$. 
By Lemma \ref{DistanceEisenstein} and the assumption that $u_{L/K}$ is an integer, 
we have 
\[ v_K(f(\pi_g)) = \min_{0 \leq i \leq e-1} 
\{ v_K(a_i - b_i) + \frac{i}{e} \} = v_K(a_0 - b_0) = m. \]
Thus we have $b_0 = u a_0$ for some $u \in U_K^{m-1} \setminus U_K^m$.
Let $f_0 := X^e + b_{e-1} X^{e-1} + \cdots + b_1 X + a_0$. 
Note that $v_K(f,f_0) = \min_{1 \leq i \leq e-1} \{v_K(a_i - b_i) + i/e \} > m$.
According to Proposition \ref{KrasnerLemmaRenewB}, 
the extension defined by $f_0$ coincides with $L$.
By replacing $f$ with $f_0$, 
we reduce the problem to the desired situation.

Second, we show that $L=M_u$ for any $u \in U_K^{m-1} \setminus U_K^m$ if and only if 
the map $N_{m-1}$ is surjective, 
where $M_u/K$ is the extension defined by $g_u$ and 
$N_{m-1}$ is the norm map defined in Section \ref{Ramification}. 
Assume $L=M_u$ for any $u \in U_K^{m-1} \setminus U_K^m$. 
By Lemma \ref{Herbrand}, 
we have $v_K(\sigma_0 (\pi_L) - \pi_{M_u}) = i_{L/K}$ for some $\sigma_0 \in G$.
Take $u':=\pi_{M_u}/\sigma_0 (\pi_{L}) \in L=M_u$. 
Note that $v_L(1-u') = ei_{L/K}-1$ 
by the equality $v_K(\sigma_0 (\pi_L) - u' \sigma_0 (\pi_L)) = i_{L/K}$. 
and $\psi(m-1) = ei_{L/K}-1$ by Remark \ref{Numbering}. 
Thus we have $u' \in U_L^{\psi(m-1)} \setminus U_L^{\psi(m-1)+1}$. 
Moreover, we have $u a_0 = N(\pi_{M_u}) = 
N(u' \sigma (\pi_L)) = N(u') N(\sigma (\pi_L)) = N(u') a_0$, 
so that we have $N(u') = u$. 
Conversely, we assume that the map $N_{m-1}$ is surjective. 
Then there exists an element $u'$ of $U_L^{\psi(m-1)} \setminus U_L^{\psi(m-1)+1}$ such 
that $N(u') = u$. 
Note that $\pi_L' := u' \pi_L$ is a uniformizer of $L$ and 
$v_K(\pi_L' - \pi_L) = i_{L/K}$. 
Let $f'$ be the minimal polynomial of $\pi_L'$ over $K$. 
By Lemma \ref{CalculationUnit} below, 
we have $v_K(\pi_L' - \pi_L) = \sup_{\sigma \in G} v_K(\pi_L' - \sigma (\pi_L))$. 
According to Lemma \ref{Herbrand}, 
we have 
\[
\begin{split}
v_K(f,f') &= v_K(f(\pi_L')) \\
&=\ff_{L/K}(\sup_{\sigma \in G} v_K(\pi'_L - \sigma(\pi_L))) 
= \ff_{L/K}(v_K(\pi_L' - \pi_L ))\\
&= \ff_{L/K}(i_{L/K}) = u_{L/K}=m. 
\end{split}
\]
By the ultrametric inequality, 
we have 
\[ v_K(g_u,f') \geq \min \{ v_K(f,g_u),v_K(f,f') \} = m. \]
The constant term of $f'$ is the same as the one of $g_u$. 
Then we have $v_K(g_u,f') \not= m$ by Lemma \ref{DistanceEisenstein} 
and that $m$ is an integer. 
Thus we have $v_K(g_u,f') > m$. 
According to Lemma \ref{Herbrand}, 
we have $L = M_u$. 
Therefore, 
we reduce the truth of $\Tem$ for $m=u_{L/K}$ to the surjectivity 
of the map $N_{m-1}$. 
Thus it is enough to prove 
$\mathrm{Coker}(N_{m-1}) \cong \mathrm{Hom}_{\mathrm{cont}}(G_k,G_{(i_{L/K})})$. 
It follows from Lemma \ref{CokerNorm} below immediately as $n=m-1$. 
\end{proof}
\begin{remark}
In the proof of Theorem \ref{ProofAbelianCase}, 
we do not require the assumption that $L/K$ is abelian. 
We need only the assumption that $u_{L/K}$ is an integer. 
\end{remark}
\begin{lemma}\label{CalculationUnit}
We have 
\[ v_K(\pi_L' - \sigma (\pi_L)) 
\begin{cases} < i_{L/K} & \text{$\sigma \in G \setminus G_{(i_{L/K})}$} \\
= i_{L/K} & \text{$\sigma \in G_{(i_{L/K})}$}.
\end{cases}
\]
Therefore, we have $v_K(\pi_L' - \pi_L) = i_{L/K} 
= \sup_{\sigma \in G} v_K(\pi_L' - \sigma (\pi_L))$. 
\end{lemma}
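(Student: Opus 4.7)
My plan is to factor $\pi_L$ out, writing
\[ \pi_L' - \sigma(\pi_L) = \bigl(u' - \sigma(\pi_L)/\pi_L \bigr)\, \pi_L, \]
so that analysing $v_K(\pi_L' - \sigma(\pi_L))$ reduces to locating $u' - \sigma(\pi_L)/\pi_L$ in the unit filtration $(U_L^n)_n$. The numerical input is $\psi(m-1) = ei_{L/K} - 1$ (Remark \ref{Numbering}); this together with the hypothesis $u' \in U_L^{\psi(m-1)} \setminus U_L^{\psi(m-1)+1}$ gives $v_L(u' - 1) = ei_{L/K} - 1$, while for $\sigma \neq 1$ one has $v_L(\sigma(\pi_L)/\pi_L - 1) = e\,\ii_{L/K}(\sigma) - 1$ directly from the definition of $\ii_{L/K}$.

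For $\sigma \in G \setminus G_{(i_{L/K})}$, the strict inequality $\ii_{L/K}(\sigma) < i_{L/K}$ places $\sigma(\pi_L)/\pi_L - 1$ and $u' - 1$ in different graded pieces of the filtration, so the strong triangle inequality yields $v_L(u' - \sigma(\pi_L)/\pi_L) = e\,\ii_{L/K}(\sigma) - 1$ and hence $v_K(\pi_L' - \sigma(\pi_L)) = \ii_{L/K}(\sigma) < i_{L/K}$. The case $\sigma = 1$ is immediate from $v_K((u'-1)\pi_L) = i_{L/K}$.

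The substantive case is $\sigma \in G_{(i_{L/K})} \setminus \{1\}$. Maximality of the break $i_{L/K}$ forces $\ii_{L/K}(\sigma) = i_{L/K}$ exactly, so both $u'$ and $\sigma(\pi_L)/\pi_L$ lie in $U_L^{\psi(m-1)} \setminus U_L^{\psi(m-1)+1}$ and the strong triangle inequality no longer separates them. The task reduces to showing that their classes in the graded quotient $U_L^{\psi(m-1)}/U_L^{\psi(m-1)+1}$ are distinct; here I would invoke Proposition \ref{exact}. The class of $\sigma(\pi_L)/\pi_L$ is $\theta_{m-1}(\sigma)$, which lies in $\ker(N_{m-1})$ by exactness, whereas $N_{m-1}(u') = u$ is nonzero in $U_K^{m-1}/U_K^m$ because $u \in U_K^{m-1} \setminus U_K^m$. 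Hence $u'$ and $\theta_{m-1}(\sigma)$ must differ modulo $U_L^{\psi(m-1)+1}$, yielding $v_K(\pi_L' - \sigma(\pi_L)) = i_{L/K}$. The concluding identity $v_K(\pi_L' - \pi_L) = i_{L/K} = \sup_{\sigma \in G} v_K(\pi_L' - \sigma(\pi_L))$ then follows at once.

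The hard part is precisely this last case: one must recognise that the hypothesis $N(u') = u \notin U_K^m$ is exactly what prevents $u'$ from coinciding with any $\theta_{m-1}(\sigma)$ modulo $U_L^{\psi(m-1)+1}$, via the exactness in Proposition \ref{exact}. The other ingredients---the factorisation of $\pi_L$ out, the indexing conversion $\psi(m-1) = ei_{L/K} - 1$, and repeated use of the strong triangle inequality---are routine bookkeeping.
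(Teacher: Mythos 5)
Your proof is correct and follows essentially the same route as the paper: case analysis on whether $\sigma$ lies in $G_{(i_{L/K})}$, the strong triangle inequality to dispatch the easy case, and the norm map to distinguish the classes of $u'$ and $\sigma(\pi_L)/\pi_L$ in the graded quotient in the hard case. The only cosmetic differences are that you factor out $\pi_L$ rather than $\sigma(\pi_L)$ and cite the exactness of the sequence in Proposition~\ref{exact} to get $\theta_{m-1}(\sigma) \in \ker N_{m-1}$, whereas the paper computes $N(\pi_L/\sigma(\pi_L))=1$ directly—these are the same fact.
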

\begin{proof}
Suppose $\sigma \not\in G_{(i_{L/K})}$. 
Then we have $v_K(\pi_L - \sigma (\pi_L)) < i_{L/K}$. 
Hence we have
\[ 
\begin{split}
v_K(\pi_L' - \sigma (\pi_L)) & = v_K(\pi_L' - \pi_L + \pi_L - \sigma (\pi_L)) \\
& = \min \{ v_K(\pi_L' - \pi_L),v_K(\pi_L - \sigma (\pi_L)) \} \\
& = \min \{i_{L/K},v_K(\pi_L - \sigma (\pi_L)) \} < i_{L/K}. 
\end{split}
\]
Next, we consider the case 
$\sigma \in G_{(i_{L/K})}$. 
Then we have $v_K(\pi_L - \sigma (\pi_L)) = i_{L/K}$ so that 
$\pi_L/\sigma (\pi_L) \in U_L^{ei_{L/K}-1}$. 
Note that 
\[ 
v_K(\pi_L' - \sigma (\pi_L)) = v_K(u' \frac{\pi_L}{\sigma (\pi_L)} - 1) + \frac{1}{e}. 
\]
To prove that RHS is equal to $i_{L/K}$, 
we show 
$u' \cdot \pi_L/\sigma (\pi_L) \in U_L^{ei_{L/K}-1} \setminus U_L^{ei_{L/K}}$. 
This is equivalent to 
$u' \cdot \pi_L/\sigma (\pi_L) \not\equiv 1$ in 
$U_L^{ei_{L/K}-1}/U_L^{ei_{L/K}}$. 
To prove this, 
it suffices to show $N(u' \cdot \pi_L/\sigma (\pi_L)) \not\equiv 1$ 
in $U_K^{m-1}/U_K^m$ by considering the norm map 
$N_{m-1}:U_L^{\psi(m-1)}/U_L^{\psi(m-1)+1} \to U_K^{m-1}/U_K^m$ with 
$\psi(m-1)=ei_{L/K}-1$. 
Since $N(u') \not\equiv 1 \pmod{U_K^m}$ and $N(\pi_L/\sigma (\pi_L))=1$, 
we have $N(u' \cdot \pi_L/\sigma (\pi_L)) = N(u') N(\pi_L/\sigma (\pi_L)) 
= N(u') \not\equiv 1 \pmod{U_K^m}$.  
\end{proof}

\begin{lemma}\label{CokerNorm}
Let $L$ be a totally ramified Galois extension of $K$ and $n$ 
be an integer $\geq 0$. 
Then we have 
\[ \mathrm{Coker}(N_n) \cong \mathrm{Hom}_{\mathrm{cont}}(G_k,G_{\psi(n)}/G_{\psi(n)+1}). 
\]
\end{lemma}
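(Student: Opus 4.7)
The plan is to reduce to the case of separably closed residue field by extending scalars to the maximal unramified extension $K^{\mathrm{ur}}$, where the norm map becomes surjective by Remark \ref{separability}, and then descend via Galois cohomology to recover the cokernel of $N_n$.

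Let $K^{\mathrm{ur}}$ denote the maximal unramified extension of $K$ in $\Omega$, with residue field $k^{\mathrm{sep}}$ and $\Gal(K^{\mathrm{ur}}/K)=G_k$, and set $L^{\mathrm{ur}}=L\cdot K^{\mathrm{ur}}$. Since $L/K$ is totally ramified, $L\cap K^{\mathrm{ur}}=K$, whence $\Gal(L^{\mathrm{ur}}/K)\cong G\times G_k$ and the conjugation action of $G_k$ on $G=\Gal(L^{\mathrm{ur}}/K^{\mathrm{ur}})$ is trivial. Both the lower numbering ramification filtration and the Hasse--Herbrand function are preserved under the unramified base change $L^{\mathrm{ur}}/K^{\mathrm{ur}}$. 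Applying Proposition \ref{exact} and Remark \ref{separability} to $L^{\mathrm{ur}}/K^{\mathrm{ur}}$ therefore yields a short exact sequence of continuous $G_k$-modules
\[ 0 \to G_{\psi(n)}/G_{\psi(n)+1} \to U_{L^{\mathrm{ur}}}^{\psi(n)}/U_{L^{\mathrm{ur}}}^{\psi(n)+1} \to U_{K^{\mathrm{ur}}}^n/U_{K^{\mathrm{ur}}}^{n+1} \to 0. \]

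Next I would identify the $G_k$-invariants and $H^1$ of each term. Pick a uniformizer $\pi_L$ of $L$; it is also a uniformizer of $L^{\mathrm{ur}}$ and is fixed by $G_k$. For $i\geq 1$, the map $1+x\pi_L^i\mapsto \bar{x}$ gives a $G_k$-equivariant isomorphism $U_{L^{\mathrm{ur}}}^i/U_{L^{\mathrm{ur}}}^{i+1}\cong k^{\mathrm{sep}}$; for $i=0$, reduction modulo the maximal ideal gives $U_{L^{\mathrm{ur}}}^0/U_{L^{\mathrm{ur}}}^1\cong(k^{\mathrm{sep}})^{\times}$. Taking $G_k$-invariants in each case recovers $U_L^{\psi(n)}/U_L^{\psi(n)+1}$ and $U_K^n/U_K^{n+1}$, and the induced norm on invariants is the original $N_n$ by functoriality of the norm under unramified base change. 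Moreover the additive and multiplicative forms of Hilbert 90 both give
\[ H^1(G_k, U_{L^{\mathrm{ur}}}^{\psi(n)}/U_{L^{\mathrm{ur}}}^{\psi(n)+1})=0. \]
Because $G_k$ acts trivially on $G_{\psi(n)}/G_{\psi(n)+1}$, one has $H^1(G_k, G_{\psi(n)}/G_{\psi(n)+1})=\mathrm{Hom}_{\mathrm{cont}}(G_k, G_{\psi(n)}/G_{\psi(n)+1})$. The long exact sequence of $G_k$-cohomology then collapses to
\[ 0 \to G_{\psi(n)}/G_{\psi(n)+1} \to U_L^{\psi(n)}/U_L^{\psi(n)+1} \overset{N_n}{\to} U_K^n/U_K^{n+1} \to \mathrm{Hom}_{\mathrm{cont}}(G_k, G_{\psi(n)}/G_{\psi(n)+1}) \to 0, \]
which yields the claimed description of $\mathrm{Coker}(N_n)$.

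The main obstacle is the bookkeeping in the first two steps: carefully verifying that the ramification filtration, the Hasse--Herbrand function, and the norm map all commute with unramified base change, so that taking $G_k$-invariants returns exactly the sequence of Proposition \ref{exact} for $L/K$; and exhibiting the $G_k$-equivariant trivialization of the middle graded piece that lets Hilbert 90 apply. The structural inputs that make this work are the total ramification assumption (giving the direct-product decomposition $\Gal(L^{\mathrm{ur}}/K)\cong G\times G_k$ and a common uniformizer of $L$ and $L^{\mathrm{ur}}$) and the separable closedness of $k^{\mathrm{sep}}$ used through Remark \ref{separability}.
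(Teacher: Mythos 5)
Your proof is essentially the same as the paper's. The paper passes to $K_0$ and $L_0$, the \emph{completions} of the maximal unramified extensions of $K$ and $L$, applies Proposition \ref{exact} together with Remark \ref{separability} to $L_0/K_0$, observes that the resulting short exact sequence is one of continuous $G_k$-modules (with trivial action on $G_{\psi(n)}/G_{\psi(n)+1}$ by total ramification), and then reads off the boundary map from the long exact cohomology sequence using $H^1_{\mathrm{cont}}(G_k,\bar{k})=0$. You do the same thing, merely with the uncompleted $K^{\mathrm{ur}}$ and $L^{\mathrm{ur}}$; this is harmless since the graded quotients $U^i/U^{i+1}$ and the ramification filtration are unchanged by completion, but working with $L_0/K_0$ makes the appeal to Proposition \ref{exact} (which, following Serre, is stated for complete fields) literal rather than something one has to unpack. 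One small point in your favor: you explicitly distinguish the $n\geq 1$ case (additive Hilbert 90 for $k^{\mathrm{sep}}$) from $n=0$ (multiplicative Hilbert 90 for $(k^{\mathrm{sep}})^\times$), whereas the paper writes only ``$k\to k$'' and tacitly treats the positive-index case.
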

\begin{proof}
Let $K_0$ (resp.\ $L_0$) be the completion of the maximal unramified extension of $K$ 
(resp.\ $L$). 
Apply Proposition \ref{exact} to $L_0/K_0$. 
Then the sequence 
\[ 0 \to G_{\psi(n)}/G_{\psi(n)+1} \to 
U_{L_0}^{\psi(n)}/U_{L_0}^{\psi(n)+1} 
\to U_{K_0}^{n}/U_{K_0}^{n+1} \to 0 \]
is exact. 
The Galois group $G_k$ acts on $L_0$ and $K_0$ continuously. 
Define the action of $G_k$ on $G_{n}/G_{n+1}$ by the trivial action. 
Since $L/K$ is totally ramified, the action of $G$ on $L_0$ is 
compatible with $G_k$-action. 
Thus the above sequence is exact as continuous $G_k$-modules. 
Writing out the corresponding exact cohomology sequence, 
and taking into account that $H^1_{\mathrm{cont}}(G_k,\bar{k})=0$, 
we obtain 
\[ k \to k \to H^1_{\mathrm{cont}}(G_k,G_{\psi(n)}/G_{\psi(n)+1}) \to 0. \]
Consequently, we have $\mathrm{Coker}(N_{n})$ $\cong$ 
$H^1_{\mathrm{cont}}(G_k,G_{\psi(n)}/G_{\psi(n)+1})$. 
Since $G_k$ acts on $G_{\psi(n)}/G_{\psi(n)+1}$ trivially, 
this is equal to $\mathrm{Hom}_{\mathrm{cont}}(G_k,G_{\psi(n)}/G_{\psi(n)+1})$. 
Hence the result follows. 
\end{proof}
\begin{remark}
This lemma is a generalization of \cite{Serre79}, Chapter XV, 
Section 2, Proposition 3. 
\end{remark}

\section{Appendix}\label{ThePropertiesPmTem}

\quad Throughout this appendix, 
we assume that $k$ is perfect. 
We consider a property $\PPm$, which is similar to $\Tem$. 
We completely determine the truth of $\PPm$ by showing 
that $\PPm$ is equivalent to $\Pm$. 
Let $L$ be a finite Galois extension of $K$. 
Take an element $\alpha$ of $\cO_L$ such that $\cO_L=\cO_K[\alpha]$. 
Let $f$ be the minimal polynomial of $\alpha$ over $K$ and 
$P_K$ the set of all monic irreducible polynomials over $K$.
For any $g \in P_K$, 
we put $M_g=K(\beta)$, where $\beta$ is a root of $g$. 
Consider the following property for any real number $m \geq 0$: 
\begin{quote}
\begin{itemize}
\item[$\PPm$] For any $g \in P_K$, 
if $v_K(\res(f,g)) \geq \mathrm{deg}(g) m$, 
then there exists a $K$-embedding $L \inj M_g$.
\end{itemize}
\end{quote}
If $f$ and $g$ are contained in $E_K^e$, 
then $v_K(\res(f,g)) \geq \mathrm{deg}(g) m$ is equivalent 
to $v_K(f,g) \geq m$. 
Hence the property $\PPm$ is stronger than $\Tem$. 

For a finite Galois extension $L/K$ and real numbers $m \geq 0$, 
we consider the following property: 
\begin{quote}
\begin{itemize}
\item[$\Pm$] 
\emph{For any algebraic extension} $M/K$,\ \emph{if there exists an} $\cO_K$-\emph{algebra homomorphism} $\cO_L \to \cO_M/\fa_{M/K}^m$,\
\emph{then there exists a} $K$-\emph{embedding} $L \inj M$. 
\end{itemize}
\end{quote}
Fontaine proved the following: 
\begin{proposition}[\cite{Fontaine85}, Prop.\ 1.5]\label{Fontaine}
Let $L$ be a finite Galois extension of $K$ and $e$ the ramification index of $L/K$. 
Then there are following relations:

\noindent
$\mathrm{(i)}$ If $m>u_{L/K}$, then $(\mathrm{P}_m)$ is true.

\noindent
$\mathrm{(ii)}$ If $m\ leq u_{L/K}-e^{-1}$, then $(\mathrm{P}_m)$ is not true.
\end{proposition}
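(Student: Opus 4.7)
My plan is to generalize the arguments of Proposition~\ref{KrasnerLemmaRenewB} and Lemma~\ref{Example} from the totally ramified Eisenstein setting to arbitrary algebraic extensions $M/K$ and an integral generator $\alpha \in \cO_L$ with $\cO_L = \cO_K[\alpha]$ (in place of the uniformizer $\pi_L$). Part (i) becomes a direct Krasner's-lemma argument; part (ii) requires an explicit counterexample whose ramification structure is incompatible with containing $L$.

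For (i), given an $\cO_K$-algebra homomorphism $\eta : \cO_L \to \cO_M/\fa_{M/K}^m$, I would lift $\eta(\alpha)$ to $\beta \in \cO_M$. Since $\cO_L = \cO_K[\alpha]$, the homomorphism $\eta$ is determined by this choice, and its well-definedness is equivalent to $v_K(f(\beta)) \geq m > u_{L/K}$. Lemma~\ref{Herbrand} then gives
\[ \sup_{\sigma \in G} v_K(\sigma(\alpha) - \beta) = \fp_{L/K}(v_K(f(\beta))) > \fp_{L/K}(u_{L/K}) = i_{L/K}, \]
so some $\sigma_0 \in G$ satisfies $v_K(\sigma_0(\alpha) - \beta) > i_{L/K} = \sup_{\sigma \neq \sigma_0} v_K(\sigma(\alpha) - \sigma_0(\alpha))$. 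Krasner's lemma then yields $L = K(\sigma_0(\alpha)) \subseteq K(\beta) \subseteq M$.

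For (ii), I would exhibit a single counterexample. Set $d := \sharp G_{(i_{L/K})}$. The goal is to produce $\beta \in \Omega$ satisfying $\sup_{\sigma} v_K(\sigma(\alpha) - \beta) = i_{L/K} - 1/(ed)$; Lemma~\ref{Herbrand} then automatically gives $v_K(f(\beta)) = \ff_{L/K}(i_{L/K} - 1/(ed)) = u_{L/K} - e^{-1} \geq m$, so with $M := K(\beta)$ the assignment $\alpha \mapsto \beta$ defines a well-defined $\cO_K$-algebra homomorphism $\cO_L \to \cO_M/\fa_{M/K}^m$. A natural first attempt is $\beta := \alpha + \delta$ with $v_K(\delta) = i_{L/K} - 1/(ed)$; the triangle inequality, combined with $\sup_{\sigma \neq 1} v_K(\sigma(\alpha) - \alpha) = i_{L/K}$, then confirms that the supremum is indeed achieved (uniquely) at $\sigma = 1$.

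The remaining and most delicate task is to choose $\delta$ so that $L \not\hookrightarrow M$. If $L$ embedded into $M$, then a conjugate $\sigma_0(\alpha) \in M$ would force $v_M(\beta - \sigma_0(\alpha)) = e_{M/K}(i_{L/K} - 1/(ed))$ to be an integer; writing $A := e\,i_{L/K} \in \mathbb{Z}$ and using $\gcd(d, dA - 1) = 1$, this imposes a strong divisibility constraint relating $e_{M/K}$, $e$ and $d$. The arithmetic freedom in choosing $\delta$ — for instance as an $(ed)$-th root of an appropriate element of $K^{\times}$ — allows one to violate this constraint. The main obstacle lies in the wildly ramified case where $\gcd(d,e)$ may be nontrivial: here the naive choice of $\delta$ need not adjust $e_{M/K}/e$ in the required way, and one must refine the construction. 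This careful arithmetic bookkeeping is what Fontaine carries out in \cite{Fontaine85}, Prop.~1.5.
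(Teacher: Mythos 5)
The paper offers no proof of Proposition~\ref{Fontaine}: it is stated as a citation to Fontaine, \emph{Il n'y a pas de vari\'et\'e ab\'elienne sur $\mathbb{Z}$}, Prop.~1.5. So there is no ``paper's own proof'' to match against. The closest internal analogue is Proposition~\ref{KrasnerLemmaRenewB} together with Lemma~\ref{Example}, which treat the Eisenstein-polynomial variant $\Tem$, and your proposal is openly an attempt to transplant those two arguments from the uniformizer~$\pi_L$ to a general monogenic generator~$\alpha$.

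Your part~(i) is correct, and it is the same Krasner-type argument as the paper's proof of Proposition~\ref{KrasnerLemmaRenewB}(i): lift $\eta(\alpha)$ to $\beta$, observe $v_K(f(\beta))\ge m>u_{L/K}$, feed this through Lemma~\ref{Herbrand} to get $\sup_\sigma v_K(\sigma(\alpha)-\beta)>i_{L/K}$, and apply Krasner's lemma to conclude $L=K(\sigma_0(\alpha))\subseteq K(\beta)\subseteq M$. One small wording slip: the well-definedness of the given $\eta$ \emph{implies} $v_K(f(\beta))\ge m$ (you only need this implication), it is not literally ``equivalent'' to it once $\beta$ is a fixed lift.

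Part~(ii) has a genuine gap, and you acknowledge it, but I want to pin down where it lies because I think it is a missing idea rather than ``bookkeeping.'' In Lemma~\ref{Example} the non-integrality argument is cheap precisely because $\pi_g$ is forced to be a uniformizer of $M_g$, so $e_{M_g/K}=e$ is known a priori and $v_{M_g}=e\,v_K$ on $M_g$ converts the fractional quantity $i_{L/K}-1/(ed)$ into the non-integer $e\,i_{L/K}-1/d$. When you replace $\pi_L$ by $\alpha$ and set $\beta=\alpha+\delta$ with $v_K(\delta)=i_{L/K}-1/(ed)$, the field $M=K(\beta)$ is no longer of controlled degree or ramification index: $\beta$ is not a uniformizer and need not even be a generator of a totally ramified extension, so $e_{M/K}$ is unconstrained, and the divisibility obstruction $e_{M/K}\bigl(i_{L/K}-1/(ed)\bigr)\in\mathbb{Z}$ may very well be satisfied. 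Choosing $\delta$ ``as an $(ed)$-th root of an element of $K^\times$'' does not control $e_{M/K}$, because $M=K(\alpha+\delta)$ is not $K(\delta)$ and its ramification is governed by the interaction of $\alpha$ with $\delta$, not by $\delta$ alone. Closing the argument requires an additional structural input -- for instance, reducing to the totally ramified case (where the Eisenstein argument of Lemma~\ref{Example} applies verbatim and gives a counterexample of the same degree as $L$), or exhibiting $M$ directly as a deformation of $L$ with the correct $\fa_{M/K}$-depth. As written, the proposal defers exactly this step to Fontaine, so part~(ii) is not established.
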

The author proved the following: 
\begin{proposition}[\cite{Yoshida09}, Prop.\ 3.4]\label{Yoshida}
Let $L$ be a finite Galois extension of $K$. 
If $m < u_{L/K}$, then $\Pm$ is not true. 
\end{proposition}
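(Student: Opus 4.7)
The plan is to enlarge Fontaine's counter-example (Proposition \ref{Fontaine}(ii)), which already covers $m \leq u_{L/K} - e^{-1}$, to every $m < u_{L/K}$, closing the window $(u_{L/K} - e^{-1}, u_{L/K})$.

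\emph{Monotonicity.} Given any counter-example $(M, \eta \colon \cO_L \to \cO_M/\fa_{M/K}^{m})$ to $\Pm$, composing $\eta$ with the projection $\cO_M/\fa_{M/K}^{m} \surj \cO_M/\fa_{M/K}^{m'}$ (well defined when $m' \leq m$) yields a counter-example to $(\mathrm{P}_{m'})$ with the same $M$. Hence failure propagates downward, and it suffices to produce a counter-example for $m$ arbitrarily close to $u_{L/K}$ from below.

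\emph{Reduction to the totally ramified case.} Replacing $K$ by the inertia field $L^I$ is harmless: unramified base change preserves the upper numbering filtration, so $u_{L/K}$ is unchanged, and any counter-example over the new base lifts back since an $\cO_{L^I}$-algebra hom is automatically an $\cO_K$-algebra hom and a $K$-embedding through the unramified part is trivial. One may thus assume $L/K$ is totally ramified of degree $e$, and write $\pi_L$ for a uniformizer with Eisenstein minimal polynomial $f \in \cO_K[X]$.

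\emph{Construction.} Taking $M = K(\beta)$, we need $\beta \in \bar{K}$ with $v_K(f(\beta)) \geq m$ and $L \not\subset K(\beta)$ (since $L/K$ is Galois, the non-existence of a $K$-embedding $L \inj K(\beta)$ is the same as $L \not\subset K(\beta)$). By Lemma \ref{Herbrand},
\[ \sup_{\sigma \in G} v_K(\sigma \pi_L - \beta) = \fp_{L/K}(v_K(f(\beta))), \]
so when $v_K(f(\beta))$ is just below $u_{L/K}$, this supremum lies just below $i_{L/K}$ and Krasner's lemma fails to force $L \subset K(\beta)$. The candidate $\beta$ is obtained as a root of a monic irreducible $g \in \cO_K[X]$ chosen so that $v_K(\res(f, g))/\deg g$ lies in the target window and the extension $M_g = K(\beta)$ does not contain $L$.

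\emph{Main obstacle.} The hardest step is ensuring $L \not\subset K(\beta)$ when $v_K(f(\beta))$ is as close to $u_{L/K}$ as the candidate $g$ allows. In the ``lucky'' case of Theorem A where $\Tem$ holds at $m = u_{L/K}$, every Eisenstein polynomial $g$ of degree $e$ at distance exactly $u_{L/K}$ from $f$ defines an extension isomorphic to $L$, so one cannot confine $g$ to $E_K^e$. The remedy is to allow $g$ of higher degree or of non-Eisenstein shape, taking $\beta$ in a larger algebraic extension whose ramification profile is deliberately incompatible with that of $L/K$. It is precisely this extra freedom in choosing $M$ -- which $\Pm$ permits but $\Tem$ does not -- that makes $\Pm$ strictly stronger than $\Tem$ in the window $(u_{L/K} - e^{-1}, u_{L/K})$, and is the point at which the argument of \cite{Yoshida09} diverges from Fontaine's.
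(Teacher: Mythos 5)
Your proposal is incomplete: it never actually exhibits a counter-example in the window $(u_{L/K}-e^{-1},\,u_{L/K})$, which is the entire new content of the proposition beyond Fontaine's Proposition~\ref{Fontaine}(ii). The monotonicity step is correct and legitimately reduces the problem to producing a bad pair $(M,\eta)$ for $m$ arbitrarily close to $u_{L/K}$ from below, and the reduction to the totally ramified case via the inertia field is plausible (though it deserves a sentence verifying that $\fa_{M/K}^m=\fa_{M/L^I}^m$ and that a $K$-embedding $L\inj M$ with $L^I\subset M$ must restrict to an $L^I$-embedding up to an automorphism of $L^I$). But the decisive step --- constructing $\beta\in\bar K$ with $v_K(f(\beta))$ just below $u_{L/K}$ and $L\not\subset K(\beta)$ --- is only gestured at: ``allow $g$ of higher degree or of non-Eisenstein shape, \dots\ whose ramification profile is deliberately incompatible with that of $L/K$'' is a research plan, not an argument. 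As you yourself correctly observe, the failure of Krasner's hypothesis for $\beta$ merely disables one sufficient condition for $L\subset K(\beta)$; it does not show $L\not\subset K(\beta)$, so an explicit $\beta$ (or an explicit $M$ and $\eta$) must be produced and shown to avoid containing $L$. The present paper does not reprove Proposition~\ref{Yoshida}; it cites \cite{Yoshida09} for it, so the missing construction is precisely the content you were being asked to supply, and its absence is a genuine gap.
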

As a similar result of our main theorem, 
the truth of $\Pm$ at the ramification break depends on the ramification 
of $L/K$ and the residue field $k$: 
\begin{proposition}[\cite{Suzuki}, Thm.\ 1.1]\label{PmBreak}
Let $L$ be a finite Galois wildly ramified extension of $K$. 
Then the property $\Pm$ is true for $m=u_{L/K}$ 
if and only if $k$ has no Galois extension whose degree is divisible by $p$.
\end{proposition}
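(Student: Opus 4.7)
The strategy is to reduce Proposition~\ref{PmBreak} to Corollary~\ref{CorMainTheoremB} applied after unramified base change. The key observation is that, for perfect $k$, the condition ``$k$ has no Galois extension of degree divisible by $p$'' is equivalent to $k'/\wp(k')=0$ for every finite extension $k'/k$: indeed, a Sylow-type argument converts any Galois extension of degree divisible by $p$ into a cyclic degree-$p$ extension of an intermediate residue field, and conversely a cyclic degree-$p$ extension of a finite $k'/k$ Galois-closes over $k$ to give an extension of degree divisible by $p$. I would then reduce, as in Proposition~\ref{reduce} via Lemma~\ref{homequivalence}, to the case in which $L/K$ has a single ramification break at $u=u_{L/K}$, so $G=G^{(u)}$ is an elementary abelian $p$-group and $u$ is an integer by Hasse-Arf.

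\textbf{Hard direction} ($\Leftarrow$). Assume $k'/\wp(k')=0$ for every finite extension $k'/k$. Given a finite algebraic $M/K$ with an $\cO_K$-algebra homomorphism $\eta\colon\cO_L\to\cO_M/\fa_{M/K}^u$, let $M_0\subset M$ be the maximal unramified subextension and $k_0$ its residue field (a finite extension of $k$). Since $L/K$ is totally ramified and $M_0/K$ is unramified, they are linearly disjoint, and $\eta$ induces an $\cO_{M_0}$-algebra homomorphism $\eta_0\colon\cO_{L\cdot M_0}\to\cO_M/\fa_{M/M_0}^u$; here I use that upper-numbering ramification is preserved under unramified base change, so $u_{L\cdot M_0/M_0}=u$. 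A lift $\beta\in\cO_M$ of $\eta_0(\pi_{L\cdot M_0})$ can be adjusted within its residue class modulo $\fa_{M/M_0}^u$ so that $M_0(\beta)/M_0$ is totally ramified of degree exactly $e$ with $\beta$ an $M_0$-uniformizer; the vanishing $k_0/\wp(k_0)=0$ and the norm-map analysis of Lemma~\ref{CokerNorm} enter here to produce such an adjustment inside $M$. Corollary~\ref{CorMainTheoremB} applied to $L\cdot M_0/M_0$ then yields $\Tem$ at $u$, hence $\Pem$ over $M_0$ by Proposition~\ref{Tem=Pem}, and this embeds $L\cdot M_0$ into $M_0(\beta)\subset M$. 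Restriction gives the desired $L\hookrightarrow M$.

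\textbf{Easy direction} ($\Rightarrow$). Contrapositively, suppose $k$ has a Galois extension of degree divisible by $p$, and pick a finite extension $k_0/k$ with $k_0/\wp(k_0)\neq 0$; let $K_0/K$ be the corresponding unramified extension. By Corollary~\ref{CorMainTheoremB} applied to $L\cdot K_0/K_0$, there exists a totally ramified degree-$e$ extension $M_1/K_0$ carrying an $\cO_{K_0}$-algebra homomorphism $\cO_{L\cdot K_0}\to\cO_{M_1}/\fa_{M_1/K_0}^u$ with $M_1\not\cong L\cdot K_0$ over $K_0$. Viewing $M_1$ as an algebraic extension of $K$ of degree $e[K_0:K]$, restriction yields a counterexample to $\Pm$: any $K$-embedding $L\hookrightarrow M_1$ would, after composing with $K_0$, force $L\cdot K_0\cong M_1$ over $K_0$ (both are totally ramified of degree $e$ inside $M_1$), contradicting the construction.

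\textbf{Main obstacle.} The most delicate step is the adjustment of the lift $\beta$ in the hard direction: producing, inside $M$ itself, an $M_0$-uniformizer of a degree-$e$ subextension above $\pi_L$, so that $\Pem$ over $M_0$ can be applied. This requires the norm-map surjectivity provided by $k_0/\wp(k_0)=0$, combined with careful bookkeeping of Hasse-Herbrand transitions for $M_0(\beta)/M_0$ in the spirit of Proposition~\ref{ProofAbelianCase}.
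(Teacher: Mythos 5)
The paper does not actually prove Proposition~\ref{PmBreak}: it is quoted from \cite{Suzuki}, Thm.~1.1, and used as a black box (together with Proposition~\ref{Equivalence}) to derive Corollary~\ref{PPmBreak}. So there is no internal proof to compare against, and your proposal has to stand on its own.

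Your reduction to a single-break abelian case via Lemma~\ref{homequivalence} is legitimate (as for $\Pem$, the lemma applies to arbitrary $M$, and $L \inj M$ over $K$ upgrades to $L \inj M$ over $K'$ since $K'/K$ is Galois). Your translation of the residue-field hypothesis --- ``$k$ has no Galois extension of degree divisible by $p$'' is equivalent to ``$k'/\wp(k')=0$ for every finite $k'/k$'' --- is also correct, and it correctly pinpoints that $\Pm$ at $u_{L/K}$ is a genuinely stronger statement than $\Tem$ at $u_{L/K}$ (which, by Corollary~\ref{CorMainTheoremB}, only sees $k/\wp(k)$). The easy direction ($\Rightarrow$) works as you describe: $\fa_{M_1/K}^u=\fa_{M_1/K_0}^u$ because $K_0/K$ is unramified and $u_{LK_0/K_0}=u_{L/K}$, so the counterexample to $\Tem$ for $LK_0/K_0$ furnished by Corollary~\ref{CorMainTheoremB} and Proposition~\ref{Tem=Pem} (using $u>1$) pulls back to a counterexample to $\Pm$ for $L/K$, and the degree argument ruling out a $K$-embedding is sound.

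The hard direction ($\Leftarrow$), however, has a genuine unfilled gap --- the very step you label the ``main obstacle.'' After passing to the maximal unramified subextension $M_0\subset M$, you have $\eta_0\colon \cO_{LM_0}\to\cO_M/\fa_{M/M_0}^u$, but the ramification index of $M/M_0$ may be a proper multiple of $e$, and $M$ need not contain \emph{any} totally ramified degree-$e$ extension of $M_0$. The properties $\Tem$ and $\Pem$ for $LM_0/M_0$ only constrain degree-$e$ totally ramified extensions of $M_0$, so they cannot be invoked directly. Your proposed fix --- that a lift $\beta$ of $\eta_0(\pi_{LM_0})$ ``can be adjusted within its residue class modulo $\fa_{M/M_0}^u$ so that $M_0(\beta)/M_0$ is totally ramified of degree exactly $e$'' --- is asserted, not proved, and it is essentially the whole content of the theorem. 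If $v_{M_0}(f'(\beta))>u$, Proposition~\ref{KrasnerLemmaRenewB}(i) already gives $LM_0\subset M_0(\beta)\subset M$ with no hypothesis on $k$; the only hard case is $v_{M_0}(f'(\beta))=u$ exactly with $[M_0(\beta):M_0]>e$, and there the norm-map/cohomology input must be brought to bear on the pair $LM/M$ (or some comparable configuration), not on a hypothetical degree-$e$ subfield of $M$. Until that step is supplied with an actual argument, the proof is incomplete.
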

\begin{remark}
If $L/K$ is at most tamely ramified, 
then $\Pm$ is not true for $m=u_{L/K}$. 
This is shown in the proof of Proposition 3.3, \cite{Yoshida09}. 
\end{remark}

In fact, we have the following: 
\begin{proposition}\label{Equivalence}
For any real number $m \geq 0$, 
$\PPm$ is equivalent to $\Pm$. 
\end{proposition}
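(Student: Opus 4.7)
The plan is to build a dictionary between the two properties using two facts already recorded in the paper: the identification $\cO_L = \cO_K[X]/(f(X))$ (coming from $\cO_L = \cO_K[\alpha]$ and $\cO_L$ being $\cO_K$-free of rank $\deg f$), together with the resultant formula $v_K(\res(f,g)) = \deg(g)\, v_K(f(\beta))$ for any root $\beta$ of $g$. Under this dictionary, giving an $\cO_K$-algebra homomorphism $\cO_L \to \cO_M/\fa_{M/K}^m$ is the same as choosing an element $\beta' \in \cO_M$ with $v_K(f(\beta')) \geq m$, and once such a $\beta'$ is fixed, its minimal polynomial over $K$ is exactly the monic irreducible polynomial for which $\PPm$ would apply.

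For $\Pm \Rightarrow \PPm$, I would take $g \in P_K$ with $v_K(\res(f,g)) \geq \deg(g)\, m$ and fix a root $\beta \in \Omega$ of $g$. The hypothesis rewrites as $f(\beta) \in \fa_{M_g/K}^m$, so the assignment $\alpha \mapsto \beta$ descends through $\cO_L = \cO_K[X]/(f(X))$ to a well-defined $\cO_K$-algebra homomorphism $\cO_L \to \cO_{M_g}/\fa_{M_g/K}^m$, to which $\Pm$ applies and yields a $K$-embedding $L \inj M_g$, which is precisely what $\PPm$ requires for $g$.

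For $\PPm \Rightarrow \Pm$, given $\eta : \cO_L \to \cO_M/\fa_{M/K}^m$, I would lift $\eta(\alpha)$ to some $\beta' \in \cO_M$; since $\beta'$ is integral over $\cO_K$, its minimal polynomial $g$ over $K$ lies in $P_K$ and $M_g = K(\beta') \subseteq M$. The identity $\eta(f(\alpha)) = 0$ gives $v_K(f(\beta')) \geq m$, whence $v_K(\res(f,g)) = \deg(g)\, v_K(f(\beta')) \geq \deg(g)\, m$, and $\PPm$ produces an embedding $L \inj M_g \subseteq M$.

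I do not foresee any serious obstacle; the whole argument is a bookkeeping exercise in the dictionary above, parallel to (but simpler than) the $\Tem \Leftrightarrow \Pem$ equivalence of Proposition \ref{Tem=Pem}. The one minor point to check in the converse direction is that the lift $\beta'$ need not generate $M$ over $K$, but this causes no trouble because we only need $M_g = K(\beta') \subseteq M$, which is automatic from $\beta' \in \cO_M$. Unlike the $\Tem \Leftrightarrow \Pem$ case, no condition $m > 1$ is needed here, because $\PPm$ does not demand that the comparison polynomial have the same degree as $f$.
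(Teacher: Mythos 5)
Your proposal is correct and follows essentially the same route as the paper's own proof: both directions are handled via the resultant identity $v_K(\res(f,g)) = \deg(g)\,v_K(f(\beta))$ together with the presentation $\cO_L = \cO_K[\alpha]$, lifting $\eta(\alpha)$ in one direction and sending $\alpha \mapsto \beta$ in the other. Your explicit appeal to $\cO_L \cong \cO_K[X]/(f(X))$ for the well-definedness of the induced map is a slight expansion of what the paper leaves implicit, but the argument is the same.
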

\begin{proof}
Let $L$ be a finite Galois extension of $K$. 
Choose an element $\alpha \in \cO_L$ such that $\cO_L=\cO_K[\alpha]$. 
Let $f$ be the minimal polynomial of $\alpha$ over $K$. 
Assume that $\PPm$ is true for $f$ and $m$. 
Then we show that $\Pm$ is also true for $L/K$ and $m$. 
Suppose 
there exists an $\cO_K$-algebra homomorphism $\eta:\cO_L \to \cO_M/\fa_{M/K}^m$ 
for an algebraic extension $M$ of $K$. 
Let $\beta$ be a lift of $\eta(\alpha)$ in $\cO_M$. 
Then we have $v_K(f(\beta)) \geq m$. 
Let $g$ be the minimal polynomial of $\beta$ over $K$. 
Then we have $v_K(\res(f,g)) = \mathrm{deg}(g) v_K(f(\beta)) 
\geq \mathrm{deg}(g)m$. 
By the property $\PPm$, there exists a $K$-embedding 
$L \inj K(\beta) \subset M$. 
Conversely, assume that $\Pm$ is true for $L/K$ and $m$. 
Then we show that $\PPm$ is also true for $L/K$ and $m$. 
Suppose $v_K(\res(f,g)) \geq \mathrm{deg}(g)m$ for an arbitrary polynomial $g \in P_K$. 
Then we have $v_K(f(\beta)) \geq m$, 
where $\beta$ is a root of $g$. 
Put $M=K(\beta)$. 
The map $\cO_L \to \cO_M/\fa_{M/K}^m$ defined 
by $\alpha \mapsto \beta$ is an $\cO_K$-algebra homomorphism. 
By the property $\Pm$, there exists a $K$-embedding $L \inj M$. 
\end{proof}
Therefore, we have the following two consequences from Propositions 
\ref{Fontaine}, \ref{Yoshida} and \ref{PmBreak}:
\begin{corollary}\label{KnownResults}
Let $L$ be a finite Galois extension of $K$. 
Then the property $\PPm$ is true for $m > u_{L/K}$, 
and is not true for $m < u_{L/K}$. 
\end{corollary}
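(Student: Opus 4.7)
The plan is to assemble the corollary directly from the three propositions cited just before it, using Proposition~\ref{Equivalence} as the bridge between $\PPm$ and $\Pm$. Since Propositions~\ref{Fontaine} and~\ref{Yoshida} together pin down the truth of $\Pm$ for all $m \neq u_{L/K}$, translating these statements across the equivalence $\PPm \iff \Pm$ yields exactly the two claims in the corollary.

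Concretely, I would proceed as follows. First, I would invoke Proposition~\ref{Equivalence} to replace the property $\PPm$ (phrased in terms of resultants of irreducible polynomials) by the property $\Pm$ (phrased in terms of $\cO_K$-algebra homomorphisms modulo $\fa_{M/K}^m$). Second, for the range $m > u_{L/K}$, Proposition~\ref{Fontaine}(i) gives that $\Pm$ holds, so by the equivalence $\PPm$ holds as well. Third, for the range $m < u_{L/K}$, Proposition~\ref{Yoshida} asserts that $\Pm$ fails, and again by the equivalence this failure is inherited by $\PPm$.

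There is no substantive obstacle here: the corollary is an immediate consequence of the already established Proposition~\ref{Equivalence} together with the cited results of Fontaine and the author. I would emphasize only that the corollary deliberately excludes the endpoint $m = u_{L/K}$, since the behavior at the ramification break is controlled by Proposition~\ref{PmBreak} (together with the remark following it), which depends on a finer invariant, namely whether the residue field $k$ admits a Galois extension of degree divisible by $p$ and whether $L/K$ is wildly ramified. Hence the corollary is the clean statement obtained by discarding this delicate boundary case.
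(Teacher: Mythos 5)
Your argument is correct and matches the paper exactly: the corollary follows by combining Proposition~\ref{Equivalence} (the equivalence $\PPm \iff \Pm$) with Proposition~\ref{Fontaine}(i) for $m > u_{L/K}$ and Proposition~\ref{Yoshida} for $m < u_{L/K}$. The paper states this without a separate proof, and your write-up supplies precisely the intended assembly.
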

\begin{corollary}\label{PPmBreak}
Let $L$ be a finite Galois wildly ramified extension of $K$. 
Then the property $\PPm$ is true for $m=u_{L/K}$ 
if and only if $k$ has no Galois extension whose degree is divisible by $p$.
\end{corollary}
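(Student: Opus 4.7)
The plan is to deduce Corollary \ref{PPmBreak} immediately by combining the two pieces we already have: Proposition \ref{Equivalence}, which gives the equivalence $\PPm \iff \Pm$ for every real number $m \geq 0$, and Proposition \ref{PmBreak}, which settles the truth of $\Pm$ at $m = u_{L/K}$ for wildly ramified $L/K$.

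Concretely, I would first specialize Proposition \ref{Equivalence} to $m = u_{L/K}$, obtaining that $\PPm$ for this value of $m$ is equivalent to $\Pm$ for the same value. Then I would quote Proposition \ref{PmBreak} to replace $\Pm$ for $m = u_{L/K}$ by the condition that $k$ has no Galois extension whose degree is divisible by $p$. Chaining these two equivalences yields exactly the statement of the corollary.

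There is essentially no obstacle: both ingredients are already in hand and the wild ramification hypothesis in the corollary is precisely the hypothesis of Proposition \ref{PmBreak}, so no additional verification (e.g. on the residue field or on the integrality of $u_{L/K}$) is needed. The proof will therefore be a one-line deduction reading, in effect, ``combine Proposition \ref{Equivalence} with Proposition \ref{PmBreak}.''
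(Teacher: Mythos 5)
Your proposal is correct and matches the paper's own argument: the corollary is indeed obtained by combining the equivalence $\PPm \iff \Pm$ from Proposition \ref{Equivalence} with Proposition \ref{PmBreak} applied at $m = u_{L/K}$.
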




\begin{thebibliography}{9999}

\bibitem[Fo]{Fontaine85}
J.-M.~Fontaine, \emph{Il n'y a pas de vari\'et\'e ab\'elienne sur $\mathbb{Z}$},
Invent.\ Math.\ {\bf 81} (1985) no.\ 3, 515--538

\bibitem[Kr]{Krasner66}
M.~Krasner, \emph{Nombre des extensions d'une degr\'e donn\'e d'un corps 
$\mathfrak{P}$-adique}, 
Les Tendamces G\'eom\'etriques en Alg\'ebre et Th\'eorie des Nombres, 
Ed.\ CNRS, Paris, 1966

\bibitem[PR]{Pauli01}
S.~Pauli and X.-F.~Roblot, \emph{On the computations of all extensions of $p$-adic 
field of a given degree}, Math.\ Comp.\ {\bf 70} (2001) no.\ 236, 1641--1659

\bibitem[Se]{Serre79}
J.-P.~Serre, \emph{Local Fields},\ Graduate Texts in Mathematics 
{\bf 67}, Springer-Verlag (1979)

\bibitem[SY]{Suzuki}
T.~Suzuki and M.~Yoshida, \emph{Fontaine's property $\Pm$ 
at the maximal ramification break}, arXiv.org:1012.2935.

\bibitem[YY1]{Yoshida11}
S.~Yokoyama and M.~Yoshida, \emph{An identification for Eisenstein polynomials of degree $p$ 
over a $p$-adic field}, 
arXiv.org:1105.5520. 

\bibitem[YY2]{Yoshida12}
S.~Yokoyama and M.~Yoshida, \emph{Generating program for all abelian extensions of 
a $p$-adic field}, preprint

\bibitem[Yo]{Yoshida09}
M.~Yoshida, \emph{Ramification of local fields and Fontaine's property $\Pm$}, 
J.\ Math.\ Sci.\ Univ.\ Tokyo {\bf 17} (2010) no.\ 3, 247--265


\end{thebibliography}
\end{document}